\documentclass[11pt,reqno]{amsart}
\usepackage{amsmath,amsopn,amssymb,amsthm}
\usepackage[cp1251]{inputenc}
\usepackage[english]{babel}
\usepackage[pagebackref,breaklinks=true,colorlinks=true,linkcolor=blue,citecolor=blue,urlcolor=blue]{hyperref}
\usepackage{graphicx}%

\voffset -1.3cm
\hoffset -1.1cm
\textwidth 14.5cm
\textheight 22cm
\newtheorem{theorem}{Theorem}[section]

\newtheorem{corollary}[theorem]{Corollary}

\newtheorem{lemma}[theorem]{Lemma}

\newtheorem{question}[theorem]{Question}

\newtheorem{remark}[theorem]{Remark}

\renewenvironment{proof}[1][Proof]{\noindent\textbf{#1.} }{\ \rule{0.5em}{0.5em}}

\begin{document}
\title[Some geometric correspondences for homothetic navigation]{Some geometric correspondences \\ for homothetic navigation}
\author{Ming Xu, Vladimir Matveev, Ke Yan, Shaoxiang Zhang }

\address{Ming Xu \newline
School of Mathematical Sciences,
Capital Normal University,
Beijing 100048,
P. R. China}
\email{mgmgmgxu@163.com}

\address{Vladimir Matveev\newline
Institut f\"{u}r Mathematik,
Fakult\"{a}t f\"{u}r Mathematik und Informatik,
Friedrich-Schiller-Universit\"{a}t Jena,
Germany
}
\email{vladimir.matveev@uni-jena.de}

\address{ Ke Yan\newline
School of Mathematical Sciences and LPMC,
Nankai University,
Tianjin 300071,
P.R. China
}
\email{1120160015@mail.nankai.edu.cn}

\address{ Shaoxiang Zhang,
the correspondence author\newline
School of Mathematical Sciences and LPMC,
Nankai University,
Tianjin 300071,
P.R. China
}
\email{zhangshaoxiang93@163.com}

\date{}
\maketitle

\begin{abstract}
In this paper, we provide conceptional explanations for the geodesic and Jacobi field correspondences for homothetic navigation, and then let them guide us to the shortcuts to some well known flag curvature and S-curvature formulas. They also help us directly see the local correspondence between isoparametric functions or isoparametric hypersurfaces,
which generalizes the classification works of Q. He and her coworkers for isoparametric
hypersurfaces in Randers space forms and Funk spaces.

\textbf{Mathematics Subject Classification (2010)}: 53B40, 53C42, 53C60.

\textbf{Key words}: flag curvature, geodesic, homothetic vector field, isoparametric function, Jacobi field, Zermelo navigation,
\end{abstract}

\section{Introduction}

{\it Zermero navigation} (or {\it navigation} for simplicity) is an important technique which helps us produce new Finsler metrics and study their geometric properties.
The simplest non-Riemannian Finsler metrics, Randers metrics, can be produced by navigation from Riemannian metrics (see Subsection 5.4.2 in \cite{SS2016}).
If we use a homothetic vector field in the navigation datum to
produce the new metric,
we simply call this procedure a {\it homothetic navigation}.
{\it Killing navigation}, which uses a Killing vector field in the navigation datum,
provides an important subclass of homothetic navigation.
Homothetic navigation and Killing navigation are crucial for classifying Randers metrics of constant flag or Ricci curvature
\cite{BR2004,BRS2004} and
studying closed geodesics \cite{Ka1973,Zi1982}
in Finsler geometry.

Comparing the geometry before and after a homothetic navigation,
we see many similar features and beautiful correspondences.
For example, X. Mo and L. Huang proved their flag curvature
formula for homothetic navigation \cite{MH2007}.
\begin{theorem}\label{main-thm-1}
Let $\tilde{F}$ be the Finsler metric on $M$ defined by navigation from the datum $(F,V)$, in which $V$ is a homothetic vector field
with dilation $c$, then we have the equality between flag curvatures,
\begin{equation}\label{0040}
K^{\tilde{F}}(x,\tilde{y},\tilde{\mathbf{P}})
=K^{{F}}(x,y,{\mathbf{P}})-c^2.
\end{equation}
Here $x$ is any point with $F(x,-V(x))<1$, $y$ is any nonzero vector in $T_xM$,
the tangent plane $\mathbf{P}$ is spanned by $y$ and $u\in T_xM$ satisfying $\langle u,y\rangle_y^F=0$, and the tangent plane $\tilde{\mathbf{P}}$ is spanned by $u$ and $\tilde{y}=y+F(x,y)V(x)$.
\end{theorem}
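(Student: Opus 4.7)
The plan is to derive (\ref{0040}) from the geodesic and Jacobi field correspondences announced in the abstract, rather than from a direct computation with the Chern connection. The flag curvature $K^{F}(x,y,\mathbf{P})$ is intrinsically encoded in the second-order behaviour, along the $F$-geodesic $\gamma(t)$ with initial velocity $y/F(x,y)$, of any Jacobi field $J(t)$ that is $F$-orthogonal to $\gamma'$, lies in $\mathbf{P}$, and vanishes at $t=0$; hence it should suffice to transport such Jacobi data to the $\tilde F$-side via the navigation correspondence and keep track of whatever extra terms the dilation $c$ introduces.

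First, I would set up the geodesic correspondence. Writing $\varphi_{t}$ for the local flow of $V$, the hypothesis that $V$ is homothetic with dilation $c$ amounts to saying that $\varphi_{t}$ scales $F$ by the exponential factor $e^{ct}$ (up to a sign convention). Combined with the defining relation $\tilde F(x,\tilde y)=1\iff F(x,\tilde y-V(x))=1$, this should yield an explicit formula of the form $\tilde\gamma(t)=\varphi_{\tau(t)}(\gamma(s(t)))$ identifying a unit-speed $\tilde F$-geodesic $\tilde\gamma$ with a unit-speed $F$-geodesic $\gamma$ through $c$-dependent reparametrizations $s(t),\tau(t)$. In the Killing case $c=0$ these reduce to $s(t)=\tau(t)=t$, and evaluating at $t=0$ recovers $\tilde y=y+F(x,y)V(x)$ as stated.

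Next, I would transport the Jacobi field. Given a transverse $F$-Jacobi field $J$ along $\gamma$ in $\mathbf{P}$ with $J(0)=0$, the associated geodesic variation can be pushed forward through $\varphi_{\tau(t)}$; since $\varphi_{t}$ sends $F$-geodesics to $F$-geodesics up to reparametrization, the pushed-forward family is a variation by $\tilde F$-geodesics, hence yields an $\tilde F$-Jacobi field $\tilde J$ along $\tilde\gamma$ in the transverse part of $\tilde{\mathbf{P}}$. The two flag curvatures are then encoded in the $t^{4}$-coefficients of the expansions
\begin{equation*}
F(\gamma(t),J(t))^{2}=t^{2}-\tfrac{1}{3}K^{F}(x,y,\mathbf{P})\,t^{4}+O(t^{5}),\qquad \tilde F(\tilde\gamma(t),\tilde J(t))^{2}=t^{2}-\tfrac{1}{3}K^{\tilde F}(x,\tilde y,\tilde{\mathbf{P}})\,t^{4}+O(t^{5}),
\end{equation*}
once one normalizes $J$ and $\tilde J$ to have unit initial covariant derivative.

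Finally, I would compare these two expansions. The exponential scaling of transverse vectors under $\varphi_{\tau(t)}$ injects an $e^{ct}$-type factor into $\tilde F(\tilde\gamma(t),\tilde J(t))$, whose Taylor expansion, once reorganized through the reparametrizations $s(t),\tau(t)$, contributes exactly $-c^{2}$ at the $t^{4}$-level when one solves back for the flag curvature coefficient; this gives (\ref{0040}). The technical heart of the argument, and the step I expect to be the main obstacle, is making the geodesic correspondence in the genuinely homothetic (non-Killing) regime sufficiently explicit and verifying that $\tau(t)$ and $s(t)$ combine so that the two Jacobi equations differ by precisely the additive constant $c^{2}$, rather than by some unwanted $c$-linear or lower-order term. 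For Killing fields this comparison is classical and essentially immediate; for a nonzero dilation the exponential reparametrizations demand careful bookkeeping.
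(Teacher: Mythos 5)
Your outline is essentially the paper's: reduce everything to the geodesic correspondence $\tilde\gamma(t)=\Psi_t(\gamma(\frac{e^{2ct}-1}{2c}))$ (Theorem \ref{thm-1}, so $\tau(t)=t$ and $s(t)=\frac{e^{2ct}-1}{2c}$ in your notation), push Jacobi fields forward through geodesic variations (Theorem \ref{thm-2}), and extract $-c^2$ from the interaction between the reparametrization and the conformal scaling of transverse vectors. The step you flag as the main obstacle is exactly the paper's Lemma \ref{lemma-key-calculation}, and it does close: the factor multiplying $\langle v,v\rangle^F$ is not just the homothety scaling $e^{-4ct}$ but also the navigation factor $(1+\langle V,\bar y\rangle^F_{\bar y})^{-1}$ from Lemma \ref{lemma-navigation-fundamental-tensor}, and it is Lemma \ref{lemma-homo-Jacobi} (the affine law $\langle V(\gamma(t)),\dot\gamma(t)\rangle^F_{\dot\gamma(t)}=c_0-2ct$) that makes these combine into a constant times $e^{-2ct}$; your proposal does not identify this mechanism, so as written that step is a genuine gap in the sketch, though not in the strategy. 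The one place you diverge from the paper is the Jacobi-field characterization of flag curvature: you use the classical $t^4$-expansion of $\langle J,J\rangle$ for a Jacobi field with $J(0)=0$, whereas the paper (Lemma \ref{lemma-flag-curv-Jacobi}) takes $J(0)=u\neq0$ and characterizes $K^F$ as a maximum of $-\frac{d^2}{dt^2}(\langle J,J\rangle)^{1/2}$ over all such orthogonal Jacobi fields. The paper needs this max-of-the-norm device because the transported field with $\tilde J(0)=u$ no longer satisfies $D^{\tilde F}\tilde J(0)=0$ when $c\neq0$ (this is why the Killing-case argument of Foulon--Matveev does not carry over directly); your choice $J(0)=0$ sidesteps that issue entirely, since the initial covariant derivative of a vanishing field transforms tensorially, so $D^{\tilde F}\tilde J(0)=D^FJ(0)=u$ and the flag $\tilde{\mathbf P}=\mathrm{span}\{\tilde y,u\}$ comes out right. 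Two small cautions for your route: you must verify the fourth-order expansion with $D^F$ and $g^F_{\dot\gamma}$ in the Finsler setting (it follows from Lemma \ref{lemma-covariant-derivative} and the Jacobi equation), and you cannot normalize $J$ and $\tilde J$ independently to unit initial derivative, since $\langle u,u\rangle^{\tilde F}_{\tilde y}=(1+c_0)^{-1}\langle u,u\rangle^F_y$; this constant cancels when you take the ratio of the $t^4$ and $t^2$ coefficients, so the conclusion $K^{\tilde F}=K^F-c^2$ survives, as your own expansion of $e^{-ct}s(t)=t+\frac{c^2}{6}t^3+O(t^4)$ confirms.
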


The notions of homothetic vector field and its dilation
are according to the convention of Subsection 5.4.2 in \cite{SS2016}.
See Section 3 for equivalent definitions for them.
The equality (\ref{0040}) when $c=0$, i.e., the flag curvature formula for Killing navigation, was firstly found by
P. Foulon \cite{Fo2004}. There are many interesting applications
\cite{HD2012,XD2018} for his formula.

Recently,
Q. He and her coworkers classified locally all isoparametric hypersurfaces in a Randers space form $(M,F)$,
with respect to the Busemann-Hausdorff (or B.H. for simplicity) volume form $d\mu^F_{\mathrm{BH}}$ \cite{HDY2019}. Their classification result can be summarized as the following theorem.

\begin{theorem}\label{main-thm-5}
Let $\tilde{F}$ be a Randers metric defined by navigation from the datum
$(F,V)$, in which $F$ is a Riemannian metric with constant curvature and $V$ is a homothetic or Killing vector field for the
metric $F$. Then locally around any $x_0\in M$ where $F(x_0,-V(x_0))<1$,
any hypersurface is isoparametric for $(F,d\mu^F_{\mathrm{BH}})$
if and only if it is isoparametric for $(\tilde{F},d\mu^{\tilde{F}}_{\mathrm{BH}})$.
\end{theorem}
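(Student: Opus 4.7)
The plan is to translate the isoparametric condition via a defining function: any isoparametric hypersurface is a regular level set of some isoparametric function. Starting from an $F$-isoparametric $f$ whose level sets give the hypersurfaces of interest, I will construct $\tilde{f}=g\circ f$ for a suitable one-variable $g$ with $g'>0$, and verify that $\tilde{f}$ is isoparametric for $(\tilde{F},d\mu^{\tilde{F}}_{\mathrm{BH}})$, i.e.\ that $\tilde{F}^*(d\tilde{f})$ and $\tilde{\Delta}_{\mathrm{BH}}\tilde{f}$ are both functions of $\tilde{f}$. The reverse implication follows symmetrically, since homothetic navigation is invertible: the pair $(\tilde{F},-V)$ recovers $(F,V)$ as a navigation datum.

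Step 1 invokes the geodesic correspondence for homothetic navigation. A hypersurface $N$ is $\tilde{F}$-normal to $\tilde{y}$ at $x$ if and only if it is $F$-normal to $y=\tilde{y}-\tilde{F}(x,\tilde{y})V(x)$, and unit-speed $\tilde{F}$-normal geodesics to $N$ correspond, via the flow of $V$ together with a time reparametrization tied to the dilation $c$, to unit-speed $F$-normal geodesics to $N$. This forces the $\tilde{F}$-parallel foliation of $N$ to coincide locally with the $F$-parallel foliation, up to a monotone change of signed distance, so level sets of $\tilde{f}=g(f)$ automatically match level sets of $f$.

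Step 2 handles the gradient condition. By Randers duality for the navigated metric,
\begin{equation*}
\tilde{F}^*(d\tilde{f})=g'(f)\bigl(|\nabla f|_F+df(V)\bigr).
\end{equation*}
The term $|\nabla f|_F$ is a function of $f$ by the $F$-isoparametric assumption. The term $df(V)$ must likewise be shown constant along level sets of $f$, and this is where the leaf-preserving property of the flow of $V$ from Step 1, together with the constant-curvature hypothesis on $F$, enters: the flow of $V$ permutes the parallel hypersurfaces, so $V$ splits into a tangential and a normal component with respect to the foliation, each contributing to $df(V)$ only through functions of $f$.

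Step 3 handles the Laplacian condition, which is the technical heart. Using the Jacobi field correspondence, the shape operator of each leaf for $\tilde{F}$ is expressed through its $F$ shape operator plus a correction from $V$. Combined with the volume-form identity $d\mu^{\tilde{F}}_{\mathrm{BH}}=(1-\langle V,V\rangle_F)^{(n+1)/2}\,d\mu^F_{\mathrm{BH}}$, this yields $\tilde{\Delta}_{\mathrm{BH}}\tilde{f}$ as an explicit combination of $\Delta f$, $|\nabla f|_F$, $df(V)$, $\langle V,V\rangle_F$, $\mathrm{div}_F V$ and the dilation $c$, each a function of $f$ by Step 2. Choosing $g$ to absorb the Jacobian $g'(f)$ and the conformal factor then makes $\tilde{\Delta}_{\mathrm{BH}}\tilde{f}$ a function of $\tilde{f}$. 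The main obstacle will be the interplay in this step between the homothetic dilation $c\neq 0$ (which rescales Finsler volume along $V$-orbits and appears in the divergence formula) and the conformal factor $(1-\langle V,V\rangle_F)^{(n+1)/2}$ (which is non-constant even for Killing $V$); a priori these constraints could be incompatible with a single reparametrization $g$, and it is precisely the constant-curvature hypothesis on $F$, which forces a rigid algebraic structure on $\Delta f$, that guarantees such a $g$ exists.
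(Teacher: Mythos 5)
There is a genuine gap, and it is in Step 1, on which everything else rests. You assert that the $\tilde{F}$-parallel foliation of $N$ coincides with the $F$-parallel foliation up to a monotone reparametrization of the signed distance, so that $\tilde{f}=g\circ f$ has the same level sets as $f$. This is false. The geodesic correspondence for homothetic navigation sends the $F$-unit normal geodesic $\gamma(t)$ to $\tilde{\gamma}(t)=\Psi_t\bigl(\gamma(\tfrac{e^{2ct}-1}{2c})\bigr)$, where $\Psi_t$ is the flow of $V$; hence the hypersurface at $\tilde{F}$-distance $t$ from $N$ is $\Psi_t\bigl(f^{-1}(\tfrac{e^{2ct}-1}{2c})\bigr)$, i.e.\ the $\Psi_t$-\emph{image} of an $F$-parallel leaf, not that leaf itself. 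Only the leaf $N=f^{-1}(0)$ is common to both foliations (since $\Psi_0=\mathrm{id}$). A concrete counterexample to your claim: $F$ Euclidean, $V$ a constant (Killing) vector field, $N$ a sphere; the $F$-parallel leaves are concentric spheres, while the $\tilde{F}$-parallel leaves are spheres whose centers drift along $V$. This same error sinks Step 2: for $\tilde{f}=g\circ f$ you would need $df(V)$ constant on level sets of $f$, which fails in the example above ($df(V)=\langle\partial_r,V\rangle$ is not constant on a sphere). The correct object is the function $\tilde{f}$ defined by its level sets $\tilde{f}^{-1}(t)=\Psi_t\bigl(f^{-1}(\tfrac{e^{2ct}-1}{2c})\bigr)$; one then shows $\nabla^{\tilde{F}}\tilde{f}=\Psi_*\bigl((2cf+1)\nabla^F f\bigr)$ and computes $\Psi^*\Delta^{\tilde{F}}\tilde{f}=(2cf+1)\Delta^F f-2cn$ by pulling back the volume form along $\Psi$, which needs no constant-curvature hypothesis at all (the paper's Theorem \ref{main-thm-4} drops it).

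Two further inaccuracies. First, the Busemann--Hausdorff volume form is \emph{unchanged} by navigation, $d\mu^{\tilde{F}}_{\mathrm{BH}}=d\mu^F_{\mathrm{BH}}$ (the indicatrix is merely translated by $V(x)$, which preserves the Euclidean volume it encloses); the factor $(1-\langle V,V\rangle_F)^{(n+1)/2}$ you invoke is the ratio between the BH volume of a Randers metric written as $\alpha+\beta$ and the volume of $\alpha$, where $\alpha$ is \emph{not} the navigation datum $F$ -- you are conflating the two presentations. Second, your closing appeal to the constant-curvature hypothesis as the mechanism that ``guarantees such a $g$ exists'' is a hope rather than an argument; since the correspondence is not of the form $g\circ f$ in the first place, no choice of $g$ can repair Steps 2 and 3 as written.
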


According to the work \cite{BRS2004} of D. Bao, C. Robles and Z. Shen, any Randers space form, i.e., Randers manifold with constant flag curvature, can be produced by homothetic or Killing navigation from a Riemannian space form.
In Riemannian geometry, any local isoparametric hypersurface in
a complete space form $M$ can be extended to a global one in the universal cover of $M$. When $M$ is an Euclidean space or a hyperbolic space, its global
isoparametric hypersurfaces  are classified by E. Cartan \cite{Ca1938}. When $M$ is a unit sphere, the classification
was recently completed by Q. Chi \cite{Ch2016} (see also the surveys \cite{QT2014,Th2000} and the references therein).
\medskip

Many proofs in the literature on Zermero navigation, for example, those for Theorem \ref{main-thm-1} in \cite{MH2007} and Theorem \ref{main-thm-5} in \cite{HDY2019},
have involved some
sophistical notions or complicated calculations. But we believe that
there must exist more straightforward explanations and easier proofs.

In a recent paper \cite{FM2017}, P. Foulon and the second author showed
a simple proof for P. Foulon's flag curvature formula for Killing navigation.
Their method inspired us to study the case of homothetic navigation, and see how some geometrical properties can be naturally fitted into
a system of correspondences. Firstly, we have a conceptional
explanation for the geodesic correspondence, reproving Theorem \ref{thm-1} which firstly appeared in \cite{HM2011}. Then as a corollary, we get the correspondence for orthogonal Jacobi fields
(see Theorem \ref{thm-2}). Since flag curvature can be described by Jacobi fields (see Lemma \ref{lemma-flag-curv-Jacobi}), we can use the
above correspondences to propose an alternative proof for
Theorem \ref{main-thm-1}, with a crystal theme and
minimized core calculation (see Lemma \ref{lemma-key-calculation}). By almost the same argument, we can even prove Theorem \ref{main-thm-1} when $F$ is pseudo-Finsler (i.e., Theorem 1.3 in \cite{JV2018}; see Remark \ref{remark-2}).

As a byproduct, similar thought and Lemma \ref{lemma-key-calculation} help us prove
\begin{theorem}\label{main-thm-6}
Let $\tilde{F}$ be the Finsler metric defined by navigation from the datum $(F,V)$, in which $V$ is a homothetic vector field with
dilation $c$. Then for the metrics
$F$ and $\tilde{F}$, and their B.H. volume forms
$d\mu^F_{\mathrm{BH}}$ and $d\mu^{\tilde{F}}_{\mathrm{BH}}$
respectively, we have the following equality between the S-curvatures $S^F(x,y)$ and $S^{\tilde{F}}(x,\tilde{y})$,
$$S^{\tilde{F}}(x,\tilde{y})=S^F(x,y)+(n+1)c,$$
in which $x\in M$ satisfies $F(x,-V(x))<1$,
$y$ is any $F$-unit vector in $T_xM$ and
$\tilde{y}=y+V(x)$.
\end{theorem}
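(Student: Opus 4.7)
The plan is to reduce $S^{\tilde F}$ to $S^F$ via the geodesic correspondence of Theorem~\ref{thm-1}, using the equality of Busemann--Hausdorff volume forms under navigation. First, I would establish $d\mu^{\tilde F}_{\mathrm{BH}} = d\mu^F_{\mathrm{BH}}$: the $\tilde F$-indicatrix at $x$ is the translate by $V(x)$ of the $F$-indicatrix, and the hypothesis $F(x, -V(x)) < 1$ guarantees that the $\tilde F$-unit ball at $x$ and the translate of the $F$-unit ball by $V(x)$ are convex bodies with the origin in their interior sharing the same boundary, so they coincide. Translation preserves Euclidean volume, hence $\sigma^{\tilde F}_{\mathrm{BH}} = \sigma^F_{\mathrm{BH}}$.

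Next I would use Shen's description $S(x, y) = \frac{d}{dt}\tau(\gamma(t), \dot\gamma(t))|_{t=0}$ along the unit-speed geodesic $\gamma$ from $(x, y)$, with $\tau(x, y) = \ln\sqrt{\det g(x, y)}/\sigma_{\mathrm{BH}}(x)$. By Theorem~\ref{thm-1}, the $\tilde F$-unit-speed geodesic from $(x, \tilde y)$ takes the form $\tilde c(s) = \phi_s(c(b(s)))$, where $\phi_t$ is the flow of $V$, $c(t)$ is the $F$-unit-speed geodesic from $(x, y)$, and $b'(0) = 1$. Writing $w(s) := \dot{\tilde c}(s) - V(\tilde c(s))$ for the $F$-unit vector corresponding to $\dot{\tilde c}(s)$, one finds $w(s) = b'(s)\, d\phi_s(\dot c(b(s)))$. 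Because the B.H.~densities coincide, we may split
\[
\tau^{\tilde F}(\tilde c(s), \dot{\tilde c}(s)) = \tau^F(\tilde c(s), w(s)) + \tfrac{1}{2}\ln\frac{\det \tilde g(\tilde c(s), \dot{\tilde c}(s))}{\det g(\tilde c(s), w(s))}.
\]
Using the $0$-homogeneity of $\tau^F$ in its second argument and the invariance of $\tau^F$ under the homothetic flow $\phi_s$ (the scaling factor in $\sqrt{\det g}$ is cancelled by the same factor in $\sigma^F_{\mathrm{BH}}$), the first summand equals $\tau^F(c(b(s)), \dot c(b(s)))$, whose derivative at $s = 0$ is $b'(0)\, S^F(x, y) = S^F(x, y)$.

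It then remains to show that the derivative of the second summand at $s = 0$ equals $(n+1)c$. This is where Lemma~\ref{lemma-key-calculation}, the calculational backbone of the paper, intervenes: it supplies the pointwise relation between the fundamental tensors $\tilde g$ at a $\tilde F$-unit vector and $g$ at the corresponding $F$-unit vector, with an exponent~$n+1$ arising from the dimensional structure of the Randers-type identity. Differentiating this ratio along the correspondence $\tilde c(s) = \phi_s(c(b(s)))$, the rate of change of $F$ under the homothety produces the coefficient $(n+1)c$; combined with the previous paragraph, this gives $S^{\tilde F}(x, \tilde y) = S^F(x, y) + (n+1)c$. The main obstacle is precisely this last identification: an analogue of the Randers formula $\det g = (F/\alpha)^{n+1}\det a$ is not directly available for a general Finsler $F$, so the $(n+1)c$ must be extracted from the detailed content of Lemma~\ref{lemma-key-calculation} rather than from a closed-form global expression.
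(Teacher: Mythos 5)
Your setup is sound and, up to the decisive step, follows the paper's strategy: compare the distortions $\tau^F$ and $\tau^{\tilde F}$ along the corresponding unit-speed geodesics of Theorem \ref{thm-1}, using $d\mu^{\tilde F}_{\mathrm{BH}}=d\mu^F_{\mathrm{BH}}$ (Lemma \ref{lemma-navi-BH-volume}). Your treatment of the first summand is correct and in fact cleaner than the paper's: since $g^F$ scales by $e^{-4cs}$ under $\Psi_s$ while $\sigma^F$ scales by $e^{-2cns}$ (Lemma \ref{lemma-homo-BH}), $\tau^F$ is indeed invariant under the homothetic flow and $0$-homogeneous in $y$, so $\tau^F(\tilde c(s),w(s))=\tau^F(c(b(s)),\dot c(b(s)))$ and its derivative at $s=0$ is $S^F(x,y)$.

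The gap is the second summand, which carries the entire content of the theorem: you assert that $\frac{d}{ds}\big|_{s=0}\,\tfrac12\ln\bigl(\det\tilde g(\tilde c(s),\dot{\tilde c}(s))/\det g(\tilde c(s),w(s))\bigr)=(n+1)c$ and say it is "extracted from" Lemma \ref{lemma-key-calculation}, but that lemma cannot deliver it alone. It controls only the $(n-1)$-dimensional block of the Gram matrix on the $g^F_{w}$-orthogonal complement of $w(s)$; combined with (\ref{0011}) and the $0$-homogeneity of $g^F$ it gives $\langle u,u\rangle^{\tilde F}_{\dot{\tilde c}(s)}=\frac{e^{2cs}}{1+c_0}\langle u,u\rangle^F_{w(s)}$ on that complement, hence a contribution of only $(n-1)c$ to the logarithmic derivative. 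The missing $2c$ must come from the direction of the geodesic itself: one needs (i) the identity $1+\langle V(\tilde c(s)),w(s)\rangle^F_{w(s)}=e^{-2cs}(1+c_0)$, which follows from Lemma \ref{lemma-homo-Jacobi} as in (\ref{0032})--(\ref{0033}), and (ii) the determinant of the change of basis between a frame adapted to $w(s)$ and one adapted to $\dot{\tilde c}(s)=w(s)+V$, since $\dot{\tilde c}(s)$ is not $g^{\tilde F}_{\dot{\tilde c}(s)}$-orthogonal to the same hyperplane in the naive way and $\langle w,w\rangle^{\tilde F}_{\dot{\tilde c}}\neq 1$. In the paper this is exactly the role of the volume comparison $\widetilde{\mathrm{vol}}(t)=\overline{\mathrm{vol}}(t)/(1+\langle V,\bar e_1\rangle^F_{\bar e_1})=C'e^{2ct}\overline{\mathrm{vol}}(t)$ in (\ref{0061}). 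As written, your final step restates the conclusion rather than deriving it; supplying (i) and (ii) would close the argument and essentially reproduce the paper's proof.
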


Theorem \ref{main-thm-6} seems known in folklore.
Its special case
when $F$ is Riemannian is included in Theorem 5.10 in \cite{SS2016}.

As an application of Theorem \ref{main-thm-1},
We discuss
\begin{question} \label{question}
When can the locally symmetric property of the Finsler metric $F$ be preserved after a homothetic navigation?
\end{question}

Theorem 2 in \cite{FM2017} answers Question \ref{question} for Killing navigation, which always
preserves the locally symmetric property. Here we answer Question \ref{question} for non-Killing homothetic navigation, which only preserves the
locally symmetric property for flat metrics (see Theorem \ref{thm-application}).

Finally, we study the correspondence between normalized isoparametric functions (or isoparametric hypersurfaces) before and after a homothetic navigation. The notion of normalized isoparametric function implies that its gradient vector field
generates unit speed geodesics, for which we already have the correspondence by Theorem \ref{thm-1}. Comparing the B.H. volume forms and applying fundamental properties of Lie derivative, we can easily prove a relation between the Laplacians (see Lemma \ref{lemma-last}).
Now the local correspondence
between normalized isoparametric functions is obvious (see Theorem \ref{main-thm-3}), and we can generalize Theorem \ref{main-thm-5} to the following

\begin{theorem}\label{main-thm-4}
Let $V$ be a homothetic or Killing vector field on the Finsler manifold $(M,F)$, and $\tilde{F}$ the metric defined by navigation from the datum $(F,V)$. Then locally around any point $x_0$ with
$F(x_0,-V(x_0))<1$, a hypersurface is isoparametric for  $(F,d\mu^F_{\mathrm{BH}})$ if and only if it is isoparametric for
$(\tilde{F},d\mu^{\tilde{F}}_{\mathrm{BH}})$.
\end{theorem}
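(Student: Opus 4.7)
The plan is to reduce the theorem to a statement about normalized isoparametric functions, since any isoparametric hypersurface locally arises as a regular level set of such a function, and conversely every regular level set of a normalized isoparametric function is an isoparametric hypersurface. This reduction is symmetric in $F$ and $\tilde{F}$, so in view of Theorem \ref{main-thm-3} (the local correspondence between normalized isoparametric functions, already announced in the introduction), the present theorem will follow if we can verify that the correspondence respects regular level sets.

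First I would spell out the correspondence. If $f$ is a normalized $F$-isoparametric function near $x_0$, then $|\nabla^F f|_F\equiv 1$ and the integral curves of $\nabla^F f$ are unit-speed $F$-geodesics. Applying Theorem \ref{thm-1} to each such integral curve, the reparametrized/shifted curve is a unit-speed $\tilde{F}$-geodesic whose velocity is $\nabla^F f(x)+V(x)$. Since a normalized gradient is characterised by being the unit-speed geodesic field orthogonal to the level sets, this identifies $\tilde{\nabla}f=\nabla^F f+V$, and in particular $\tilde{F}(x,\tilde{\nabla}f)\equiv 1$. Hence $f$ is automatically normalized for $\tilde{F}$ as well, and the two families of level sets coincide pointwise.

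Second, I would handle the Laplacian condition by invoking Lemma \ref{lemma-last}, which relates $\Delta^{\tilde{F}}f$ to $\Delta^F f$ by comparing the B.H. volume forms. Writing $d\mu^{\tilde{F}}_{\mathrm{BH}}=\sigma\,d\mu^F_{\mathrm{BH}}$ for some positive function $\sigma$ depending only on $V$ (not on $f$), and using $\tilde{\nabla}f=\nabla^F f+V$, the difference $\Delta^{\tilde{F}}f-\Delta^F f$ splits into (a) a divergence term for $V$, which is $(n+1)c$ by the homothetic assumption together with Theorem \ref{main-thm-6} applied to $y=\nabla^F f$, and (b) a term of the form $(\nabla^F f+V)(\log\sigma)$. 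The point is that, along each level set of $f$, the vector field $\tilde{\nabla}f$ is tangent to the same geodesic flow as $\nabla^F f+V$, so this extra term is constant along level sets exactly when $\Delta^F f$ is. Therefore $\Delta^{\tilde{F}}f=\tilde{b}(f)$ for some function $\tilde{b}$ iff $\Delta^F f=b(f)$ for some function $b$, proving both directions of the biconditional.

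The main obstacle will be the last verification: showing cleanly that the difference $\Delta^{\tilde{F}}f-\Delta^F f$, once expanded via Lemma \ref{lemma-last}, really does depend only on $f$ and not on position along a level set. The key input is that $V$ is homothetic, so its flow rescales $F$ by a global constant, and hence scales the B.H. volume form by a function of the geodesic parameter only; combined with the fact (from Theorem \ref{thm-1}) that the $\tilde{F}$-geodesic flow preserves the level-set foliation of $f$, this forces $(\nabla^F f+V)(\log\sigma)$ to be a function of $f$ alone. Once this is settled, Theorem \ref{main-thm-4} follows from the sharpened Theorem \ref{main-thm-3} by passing back from normalized isoparametric functions to the hypersurfaces themselves.
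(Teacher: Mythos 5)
Your overall strategy---realize the hypersurface as the zero level set of a normalized isoparametric function and invoke Theorem \ref{main-thm-3}---is the paper's strategy and does work, but your execution of the correspondence contains a genuine error. You claim that $\nabla^{\tilde F}f=\nabla^F f+V$, hence that $f$ itself is normalized (indeed isoparametric) for $\tilde F$ and that ``the two families of level sets coincide pointwise.'' This is false. The vector $w=\nabla^F f+V$ is $\tilde F$-unit and $g^{\tilde F}_{w}$-orthogonal to the level sets of $f$, but it fails the defining identity of the gradient in the normal direction: $df(w)=1+\langle V,\nabla^F f\rangle^F_{\nabla^F f}$ while $\langle w,w\rangle^{\tilde F}_{w}=1$. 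Hence $\nabla^{\tilde F}f=\bigl(1+\langle V,\nabla^F f\rangle^F_{\nabla^F f}\bigr)\,(\nabla^F f+V)$, and by Lemma \ref{lemma-homo-Jacobi} the factor $1+\langle V,\nabla^F f\rangle^F_{\nabla^F f}=1+c_0(x)$ varies from one integral curve to another, so $f$ is in general not even transnormal for $\tilde F$. Correspondingly, the $\tilde F$-geodesics issuing $\tilde F$-orthogonally from $N=f^{-1}(0)$, namely $\tilde\gamma(t)=\Psi_t(\gamma(\frac{e^{2ct}-1}{2c}))$ from Theorem \ref{thm-1}, leave the foliation by level sets of $f$. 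The correct correspondence (Theorems \ref{main-thm-2} and \ref{main-thm-3}) sends $f$ to a genuinely different function $\tilde f$ with $\tilde f^{-1}(t)=\Psi_t(f^{-1}(\frac{e^{2ct}-1}{2c}))$; what rescues the hypersurface statement is only that $\tilde f^{-1}(0)=f^{-1}(0)=N$ because $\Psi_0=\mathrm{id}$, not a coincidence of the whole foliations.

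The Laplacian step inherits this error and adds another. By Lemma \ref{lemma-navi-BH-volume} the two B.H.\ volume forms are equal, so your $\sigma\equiv 1$ and there is no term $(\nabla^F f+V)(\log\sigma)$ to analyse; the nontrivial volume comparison in the paper is between $d\mu^F_{\mathrm{BH}}$ and its pullback under the map $\Psi$ intertwining the two foliations (Lemma \ref{lemma-navigation-BH-2}), and the resulting identity $\Psi^*\Delta^{\tilde F}\tilde f=(2cf+1)\Delta^F f-2cn$ of Lemma \ref{lemma-last} relates the Laplacians of the two \emph{different} functions $f$ and $\tilde f$, evaluated at corresponding points of different level sets, not of $f$ with itself. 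The correction is $-2cn$ together with the multiplicative factor $(2cf+1)$, not the S-curvature shift $(n+1)c$ of Theorem \ref{main-thm-6}, which plays no role here. To repair the proposal, replace the claim ``$f$ is isoparametric for $\tilde F$'' by the construction of $\tilde f$, verify its transnormality via Theorem \ref{thm-1} and its isoparametric property via Lemma \ref{lemma-last}, and then pass to hypersurfaces using only the shared zero level set.
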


Our approach is more direct than that in \cite{HDY2019,HYS2016,HYS2017}, which studied the submanifold geometry in the Finsler context. Besides classifying isoparametric hypersurfaces in Randers space forms, Theorem \ref{main-thm-3} and Theorem \ref{main-thm-4} also help us understand
the intrinsic relation between the classification works in \cite{HYS2016} and
\cite{HYS2017}, for isoparametric hypersurfaces in Minkowski spaces and Funk spaces respectively. Furthermore, they provide
abundant examples of the isoparametric hypersurfaces in Finsler geometry (see the remark at the end of the paper).

For simplicity, we will mainly discuss
non-Killing homothetic navigation in this paper. With very minor
changes, all the statements for lemmas and theorems, and all the
arguments can be transplanted to the easier case of Killing navigation.
\medskip

In Section 2, we summarize some necessary knowledge on Finsler geometry. In Section 3,
we introduce the notions of homothetic vector field and navigation process. In Section 4, we discuss geodesic or Jacobi field correspondences with conceptional proofs. In Section 5, we prove Theorem \ref{main-thm-1} and Theorem \ref{main-thm-6}.
In Section 6, we apply Theorem \ref{main-thm-1} to answer Question \ref{question}. In Section 7, we study the local correspondence between isoparametric functions and prove Theorem \ref{main-thm-4}.

\section{Preliminaries}

In this section, we summarize some basic knowledge on Finsler geometry. See \cite{BCS2000,Sh2001,SS2016} for more details.
Throughout this paper,  we assume $M$ to be
a smooth manifold which real dimension is $n>0$.

A {\it Finsler metric} on $M$ is a continuous
function $F:TM\rightarrow [0,+\infty)$ which satisfies the following conditions for any {\it standard local coordinates} $x=(x^i)\in M$ and $y=y^i\partial_{x^i}\in T_xM$:
\begin{enumerate}
\item The restriction of $F$ to $TM\backslash 0$ is a positive smooth function.
\item For any $\lambda\geq0$, $F(x,\lambda y)=\lambda F(x,y)$.
\item When $y\neq0$, the Hessian matrix
$(g^F_{ij}(x,y))=(\frac12[F^2(x,y)]_{y^iy^j})$
is positive definite.
\end{enumerate}
We will call $(M,F)$ a {\it Finsler manifold}.
The restriction of $F$ to each tangent space $T_xM$ is called a {\it Minkowski norm}.

The Hessian matrix $(g^F_{ij}(x,y))$ defines an inner product
on $T_xM$, i.e.,
\begin{equation}\label{0007}
\langle u,v\rangle_y^F= g^F_{ij}(x,y)u^iv^j=
\frac12[F^2(x,y+su+tv)]_{st}|_{s=t=0},
\end{equation}
which depends on the choice of the nonzero base tangent vector $y$.
Sometimes we simply denote it as $g^F_y$ and call it the
{\it fundamental tensor}. The fundamental tensor $g^F_y$ is independent of the choice of $y$ in each tangent space if and only if $(M,F)$ is Riemannian.
\medskip

Arc length and distance can be similarly defined on the Finsler manifold $(M,F)$. A geodesic $\gamma(t)$ with $t\in(a,b)$
is a smooth nonconstant curve which satisfies the locally minimizing principle, i.e., for any $t_0\in(a,b)$, we can find $t_1$ and $t_2$ with $a<t_1<t_0<t_2<b$, such that $\gamma(t)$ with $t\in[t_1,t_2]$ is the unique curve realizing the distance from $x_1=\gamma(t_1)$ to $x_2=\gamma(t_2)$ \cite{BCS2000}.

We usually parametrize the geodesic $\gamma(t)$ such that $F(\dot{\gamma}(t))\equiv\mathrm{const}>0$ (or $F(\dot{\gamma}(t))\equiv1$), and call it a {\it constant speed geodesic} (or {\it unit speed geodesic} respectively). Constant speed geodesic can be equivalently defined by
the equation $D^F_{\dot{\gamma}(t)}\dot{\gamma}(t)\equiv0$.
Here the covariant derivative $D^F_{\dot{\gamma}(t)}$ is an ordinary
differential operator acting on the space of smooth vector fields
along $\gamma(t)$. See Section 5.3 in \cite{Sh2001} for its explicit expression. We will need the following property of covariant derivative.

\begin{lemma}\label{lemma-covariant-derivative}
For any smooth vector fields $U(t)$ and $V(t)$ along the  geodesic $\gamma(t)$ (i.e., $U(t),V(t)\in T_{\gamma(t)}M$ for all $t$, same below) on the Finsler manifold $(M,F)$, we have
\begin{equation}\label{0005}
\frac{d}{dt}\langle U(t),V(t)\rangle^F_{\dot{\gamma}(t)}
=\langle D^F_{\dot{\gamma}(t)}U(t),V(t)\rangle^F_{\dot{\gamma}(t)}
+\langle U(t),D^F_{\dot{\gamma}(t)}V(t)\rangle^F_{\dot{\gamma}(t)}.
\end{equation}
\end{lemma}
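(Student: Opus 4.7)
The lemma asserts that the covariant derivative $D^F_{\dot\gamma}$ along a geodesic is compatible with the fundamental tensor $g^F_{\dot\gamma(t)}$. The Riemannian-style Leibniz rule fails along a general curve in Finsler geometry: the discrepancy is controlled by the Cartan tensor and vanishes precisely when $D^F_{\dot\gamma}\dot\gamma=0$, i.e.\ on geodesics. The plan is to make this observation precise.

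First I would work in standard local coordinates $x=(x^i)$ about a point of $\gamma$. Writing $U = U^i(t)\partial_{x^i}$, $V=V^j(t)\partial_{x^j}$, $\dot\gamma = \dot\gamma^k(t)\partial_{x^k}$ and using the chain rule together with $\tfrac12\partial_{y^k}g^F_{ij}=C^F_{ijk}$ (the Cartan tensor), the left hand side of \eqref{0005} equals
\[
(\partial_{x^k}g^F_{ij})\dot\gamma^k U^iV^j + 2C^F_{ijk}\ddot\gamma^k U^iV^j + g^F_{ij}\bigl(\dot U^i V^j + U^i\dot V^j\bigr).
\]
Expanding the right hand side of \eqref{0005} via the coordinate formula $(D^F_{\dot\gamma}W)^i = \dot W^i + \Gamma^{F,i}_{jk}(\gamma,\dot\gamma)W^j\dot\gamma^k$ from Section 5.3 of \cite{Sh2001}, and matching the trivial $\dot U^iV^j$ and $U^i\dot V^j$ terms, the desired identity reduces to
\[
\bigl[(\partial_{x^k}g^F_{ij})\dot\gamma^k - g^F_{mj}\Gamma^{F,m}_{ik}\dot\gamma^k - g^F_{im}\Gamma^{F,m}_{jk}\dot\gamma^k + 2C^F_{ijk}\ddot\gamma^k\bigr]U^iV^j = 0.
\]
This is exactly the almost-metric-compatibility identity for the Chern connection with reference vector $\dot\gamma$, invariantly phrased as
\[
\tfrac{d}{dt}g^F_{\dot\gamma}(U,V) = g^F_{\dot\gamma}(D^F_{\dot\gamma}U,V) + g^F_{\dot\gamma}(U,D^F_{\dot\gamma}V) + 2C^F_{\dot\gamma}\bigl(D^F_{\dot\gamma}\dot\gamma,\,U,V\bigr),
\]
and the Cartan correction collapses because $\gamma$ is a geodesic.

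The only genuine obstacle is notational bookkeeping: one must ensure that the connection coefficients being used reproduce the covariant derivative of Section 5.3 of \cite{Sh2001}, and invoke the corresponding almost-compatibility identity. No real geometric content beyond the geodesic equation $D^F_{\dot\gamma}\dot\gamma = 0$ is needed, so the proof amounts to a one-line consequence of the standard Chern-connection calculus, tailored to the geodesic case.
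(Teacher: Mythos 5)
Your proof is correct, but it takes a genuinely different route from the paper. You argue in coordinates via the almost-metric-compatibility of the Chern connection: the general Leibniz rule carries the Cartan correction $2C^F_{\dot\gamma}(D^F_{\dot\gamma}\dot\gamma,U,V)$, which vanishes precisely because $\gamma$ is a geodesic; your reduction of (\ref{0005}) to the identity $\partial_{x^l}g^F_{ij}\dot\gamma^l-g^F_{mj}\Gamma^{F,m}_{il}\dot\gamma^l-g^F_{im}\Gamma^{F,m}_{jl}\dot\gamma^l=2C^F_{ijk}N^k_l\dot\gamma^l=-2C^F_{ijk}\ddot\gamma^k$ is exactly right, and the coordinate formula you quote for $D^F_{\dot\gamma}$ does match Section 5.3 of Shen (where $\Gamma^{F,i}_{jk}\dot\gamma^k=N^i_j=\partial G^i/\partial y^j$). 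The paper instead avoids all coordinate work: it extends $\dot\gamma$ to a local vector field $Y$ whose integral curves are constant speed geodesics, observes that the osculating Riemannian metric $g^F_Y$ has Levi-Civita covariant derivative along $\gamma$ equal to $D^F_{\dot\gamma}$ (Lemma 6.2.1 in Shen's Lectures), and then invokes the Riemannian Leibniz rule. The trade-off is that the paper's argument is shorter and connection-free but buries the role of the geodesic hypothesis inside the construction of the geodesic extension $Y$ and the citation of Shen's lemma, whereas your computation is self-contained modulo the standard almost-compatibility identity and makes the failure of (\ref{0005}) along non-geodesics explicit and quantitative. Either proof is acceptable; yours would benefit only from a precise citation for the almost-compatibility identity (e.g.\ the corresponding exercise in \cite{BCS2000} or \cite{Sh2001}) so that the "one-line consequence" claim is anchored.
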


To be self contained, we sketch a short proof of Lemma \ref{lemma-covariant-derivative} here. We can extend $\dot{\gamma}(t)$ to a smooth vector field $Y$ in a neighborhood
$\mathcal{U}$ of $\gamma$, such that each integration curve of
$Y$ is a constant speed geodesic. The fundamental tensors $g^F_{Y}$
defines a smooth Riemannian metric on $\mathcal{U}$. The covariant derivative along $\gamma(t)$ for the Levi-Civita connection of $g^F_{Y}$ coincides
with $D^F_{\dot{\gamma}(t)}$ (see Lemma 6.2.1 in \cite{Sh2001}).
So we only need to observe (\ref{0005})
in Riemannian geometry, which is a well known fact.
\medskip

{\it Flag curvature} is a natural generalization of sectional curvature
in Riemannian geometry. For any $x\in M$, $y\in T_xM$, and tangent plane $\mathbf{P}\subset T_xM$ containing $y$,
the flag curvature $K^F(x,y,\mathbf{P})$ is defined by
$$K^F(x,y,\mathbf{P})=\frac{\langle u,R^F_y u\rangle_y^F}{
\langle y,y\rangle_y^F\langle u,u\rangle_y^F-(\langle y,u\rangle_y^F)^2},$$
where $u$ is any vector in $\mathbf{P}$ such that $\mathbf{P}=\mathrm{span}\{y,u\}$.
Here the linear operator $R^F_y:T_xM\rightarrow T_xM$ is the Riemann curvature (see \cite{BCS2000,Sh2001} for its explicit formula).

We call a smooth vector field $J(t)$ along
 the unit speed geodesic $\gamma(t)$ a {\it Jacobi field} if it
satisfies the Jacobi equation
\begin{equation*}
D^F_{\dot{\gamma}(t)}D^F_{\dot{\gamma}(t)}J(t)+
R^F_{\dot{\gamma}}J(t)=0.
\end{equation*}

For example, the variation of a smooth family of constant speed geodesics provides a Jacobi field
along each geodesic in this family. Conversely, any Jacobi field
can be locally realized in this way (see Lemma \ref{lemma-Jacobi-realization} for the special case we will use later).

We call the Jacobi field $J(t)$ {\it orthogonal},
if $J(t)$ is contained in the $g^F_{\dot{\gamma}(t)}$-orthogonal complement of $\dot{\gamma}(t)$, i.e.,
$\langle J(t),\dot{\gamma}(t)\rangle^F_{\dot{\gamma}(t)}=0$, for each value of $t$.
\medskip

Busemann-Hausedorff (B.H. in short) volume form on the Finsler manifold $(M,F)$ can be locally presented as $d\mu^F_{\mathrm{BH}}=\sigma^F dx^1\cdots dx^n$. Here $$\sigma^F=\frac{\mathrm{Vol}(S^n(1))}{
\mathrm{Vol}(\{y=(y^i)|F(x,y^i \partial_{x^i})\leq 1\})},$$
in which $\mathrm{Vol}(\cdot)$ denotes the volume with respect to
the standard measure in an Euclidean space.

For all standard local coordinates,
$$\tau^F(x,y)=\ln\left(\frac{\sqrt{\det(g_{ij}^F(x,y))}}{\sigma^F}\right)$$
globally defines a smooth function on $TM\backslash 0$, called
the {\it distortion function}. The {\it S-curvature}
$S^F(x,y)$ is defined as the derivative of
$\tau^F(x,y)$ in the direction of the geodesic spray, or equivalently, the derivative of $\frac{d}{dt}\tau^F(\gamma(t),\dot{\gamma}(t))|_{t=0}$,
in which $\gamma(t)$ is the constant speed geodesic $\gamma(t)$ on $(M,F)$, satisfying $\gamma(0)=x$ and $\dot{\gamma}(0)=y$.
\section{Homothetic vector field and Zermero navigation}
Let $V$ be a smooth vector field on the Finsler manifold $(M,F)$.
Around each $x\in M$, $V$ generates
a family (a one-parameter local subgroup) of local diffeomorphisms $\Psi_t$. We call $V$ a {\it homothetic vector field} on $(M,F)$ if
\begin{equation}\label{0006}
(\Psi_t^*F)(x,y)=F(\Psi_t(x),(\Psi_t)_*(y))= e^{-2ct} F(x,y),
\end{equation}
for each $x\in M$, $y\in T_xM$ and $t\in\mathbb{R}$, whenever $\Psi_t(x)$ is well defined. The constant $c$ in (\ref{0006}) is called the {\it dilation} of $V$. Notice that (\ref{0006}) indicates $\Psi_t$
are local homothetic translations. By (\ref{0006}) and (\ref{0007}), it is easy to see that, whenever $y\in T_xM$ is nonzero and $\Psi_t(x)$ is well defined, we have
\begin{equation}\label{0011}
\langle (\Psi_t)_*u,(\Psi_t)_*v\rangle_{(\Psi_t)_*y}^F
=e^{-4ct}\langle u,v\rangle^F_y,\quad\forall u,v\in T_xM.
\end{equation}
The homothetic vector field $V$ is a {\it Killing vector field} if its dilation $c$ is zero.

Since the local homothetic or isometric translations $\Psi_t$ maps
constant speed geodesics to constant speed geodesics, the restriction of the homothetic or Killing vector field
$V$ to any constant speed geodesic $\gamma(t)$ is a Jacobi field.
So $\langle V(\gamma(t)),\dot{\gamma}(t)\rangle_{\dot{\gamma}(t)}^F$ is a linear function. More precise information is given by
the following lemma.

\begin{lemma}\label{lemma-homo-Jacobi}
Let $V$ be a homothetic vector field with dilation $c$. Then its restriction to a unit
speed geodesic $\gamma(t)$ satisfies
\begin{equation}\label{0008}
\langle V(\gamma(t)),\dot{\gamma}(t)
\rangle^F_{\dot{\gamma}(t)}\equiv c_0-2ct,
\end{equation}
in which $c_0$ is some real constant.
\end{lemma}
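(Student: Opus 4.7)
The plan is to compute $\frac{d}{dt}\langle V(\gamma(t)),\dot\gamma(t)\rangle^F_{\dot\gamma(t)}$ and show that it equals the constant $-2c$; the lemma then follows by integration. The key inputs will be Lemma~\ref{lemma-covariant-derivative} (to strip the $t$-derivative off the inner product) together with the geodesic variation obtained by flowing $\gamma$ along $V$ (to extract the factor $-2c$ from the scaling law (\ref{0006})).

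First, I would apply Lemma~\ref{lemma-covariant-derivative} with $U(t)=V(\gamma(t))$ and the second vector field taken to be $\dot\gamma(t)$. Since $\gamma$ is a geodesic, $D^F_{\dot\gamma}\dot\gamma\equiv 0$ and only one term survives:
$$
\frac{d}{dt}\langle V(\gamma(t)),\dot\gamma(t)\rangle^F_{\dot\gamma(t)}=\langle D^F_{\dot\gamma}V,\dot\gamma\rangle^F_{\dot\gamma}.
$$
It therefore suffices to show that the right-hand side equals $-2c$ pointwise along $\gamma$.

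Next, let $\Psi_s$ denote the local flow of $V$ and set $\gamma_s(t):=\Psi_s(\gamma(t))$, a geodesic variation of $\gamma$ whose variation field at $s=0$ is exactly $V(\gamma(t))$. By (\ref{0006}) each $\Psi_s$ is a local homothety scaling $F$ by $e^{-2cs}$, so each $\gamma_s$ is a constant-speed geodesic with $F$-speed $e^{-2cs}$; equivalently,
$$
F^2(\dot\gamma_s(t))\equiv e^{-4cs}.
$$
Differentiating at $s=0$ gives the value $-4c$, independently of $t$. On the other hand, applying the pointwise first-variation-of-energy formula to this geodesic variation, one obtains
$$
\frac{d}{ds}\Big|_{s=0}F^2(\dot\gamma_s(t))=2\langle D^F_{\dot\gamma}V,\dot\gamma\rangle^F_{\dot\gamma}.
$$
Matching the two expressions yields $\langle D^F_{\dot\gamma}V,\dot\gamma\rangle^F_{\dot\gamma}=-2c$, and integrating in $t$ gives (\ref{0008}) with $c_0=\langle V(\gamma(0)),\dot\gamma(0)\rangle^F_{\dot\gamma(0)}$.

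The only delicate point is the pointwise first-variation formula in the Finsler setting, since $F^2$ is not quadratic in the fiber. I would handle it with the same device used in the proof sketch of Lemma~\ref{lemma-covariant-derivative}: extend the family of tangent vectors $\dot\gamma_s(t)$ to a smooth geodesic spray $Y$ on a neighborhood of $\gamma$, so that the Chern-connection covariant derivative $D^F_{\dot\gamma}$ agrees with the Levi-Civita covariant derivative of the Riemannian metric $g^F_Y$; the identity then reduces to the classical Riemannian computation.
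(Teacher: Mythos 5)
Your proof is correct, but it takes a genuinely different route from the paper's. The paper straightens $V$ in adapted coordinates ($V=\partial_{x^1}$, $F=e^{-2cx^1}F_1(x',y)$, $\gamma(t)=(0,t,0,\dots,0)$), writes the geodesic equation in Euler--Lagrange form to get $\tfrac{d}{dt}\tfrac12[F^2]_{y^1}=\tfrac12[F^2]_{x^1}=-2c$ along $\gamma$, and then needs a separate continuity argument to cover the points where $V$ vanishes or is tangent to $\gamma$ (where such coordinates cannot be chosen). You instead argue coordinate-freely: Lemma~\ref{lemma-covariant-derivative} reduces the claim to $\langle D^F_{\dot\gamma}V,\dot\gamma\rangle^F_{\dot\gamma}=-2c$, and you extract this from the scaling law $F^2(\dot\gamma_s(t))=e^{-4cs}$ for the variation $\gamma_s=\Psi_s\circ\gamma$ via the pointwise first variation of energy. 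This treats all points uniformly and makes the mechanism (differentiating the homothety relation along the flow) transparent. The one step that carries real content is the identity $\frac{d}{ds}\big|_{s=0}F^2(\dot\gamma_s(t))=2\langle D^F_{\dot\gamma}V,\dot\gamma\rangle^F_{\dot\gamma}$, which you rightly flag: in the Finsler setting it follows from $\partial_s\,g_T(T,T)=2g_T(D^T_{\partial_s}T,T)$ (the Cartan tensor contracted with the reference vector $T$ vanishes) together with the symmetry $D^T_{\partial_s}\partial_t\gamma_s=D^T_{\partial_t}\partial_s\gamma_s$ of the Chern connection for a variation. Your fallback via the osculating metric $g^F_Y$ also works, with one caveat: $Y$ must be a geodesic field extending the velocities $\dot\gamma_s(t)$ of the \emph{whole} variation (e.g.\ flow a geodesic congruence through $\gamma$ by $\Psi_s$), not just of $\gamma$ itself, so that $F^2(\dot\gamma_s(t))=g^F_Y(Y,Y)$ holds identically in $s$ and $[V,Y]=0$ along $\gamma$, which is what lets you trade $\nabla_VY$ for $\nabla_YV=D^F_{\dot\gamma}V$.
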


\begin{proof}
The lemma is obvious when $V$ is constantly zero.

When $V$ is not constantly zero, we first prove this lemma locally
where $V$ is not tangent to $\gamma(t)$.
We can find local coordinates $x=(x^1,x')=(x^i)\in M$ and $y=y^i\partial_{x^i}$, such that $\gamma(t)=(0,t,0,\ldots,0)$ and $V$ coincides with $\partial_{x^1}$. Since $V$ is a homothetic vector field
with dilation $c$, the metric $F$ can be presented
as $F(x,y)=e^{-2cx^1}F_1(x',y)$.

From the assumption that
$\gamma(t)$ is a unit speed geodesic, i.e.,
$D^F_{\dot{\gamma}(t)}{\dot{\gamma}(t)}=0$, we can get
$$\frac{\partial}{\partial x^2}[F^2(x,\partial_{x^2})]_{y^1}
=[F^2(x,\partial_{x^2})]_{x^2 y^1}=[F^2(x,\partial_{x^2})]_{x^1}=-4c,$$
when $x^1=x^3=\cdots=x^n=0$.
Solving this differential equation with respect to the variable $x^2$, we see
$\langle V(\gamma(t)),\dot{\gamma}(t)\rangle_{\dot{\gamma}(t)}^F
=\frac{1}{2}[F^2(x,\partial_{x^2})]_{y^1}|_{(0,t,0,\ldots,0)}$
is a linear function of $t$ which slope is $-2c$.

By continuity, Lemma \ref{lemma-homo-Jacobi} is valid everywhere
for all unit speed geodesics.
\end{proof}

By similar argument with local coordinates, it is also easy to see
\begin{lemma}\label{lemma-homo-BH}
Let $V$ be a homothetic vector field with dilation $c$ on the Finsler manifold
$(M^n,F)$, and ${\Psi}_t$ the local homothetic translation generated by
$V$. Then we have the equality for the B.H. volume forms
\begin{equation*}
(\Psi_t)^*d\mu^F_{\mathrm{BH}} =d\mu^{\Psi_t^* F}_{\mathrm{BH}}=e^{-2cnt}d\mu^F_{\mathrm{BH}},
\end{equation*}
whenever $\Psi_t$ is well defined.
\end{lemma}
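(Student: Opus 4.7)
The plan is to follow the coordinate-based strategy the authors flag in the sentence preceding the statement, namely to argue exactly as in Lemma 3.2 (Lemma \ref{lemma-homo-Jacobi}). First I would reduce to the local model: pick standard coordinates $x=(x^1,x')$ in a neighbourhood where $V$ does not vanish, chosen so that $V=\partial_{x^1}$ and $\Psi_t$ acts by translation $(x^1,x')\mapsto(x^1+t,x')$. The homothetic condition (\ref{0006}) then forces $F$ to factor as $F(x,y)=e^{-2cx^1}F_1(x',y)$ for some Finsler-type function $F_1$ independent of $x^1$. Consequently the $F$-indicatrix at $x$, i.e.\ $\{y:F(x,y)\le 1\}$, is exactly the dilate of the $F_1$-indicatrix $\{y:F_1(x',y)\le 1\}$ by the factor $e^{2cx^1}$, so its $n$-dimensional Euclidean volume is multiplied by $e^{2cnx^1}$. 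From the definition of $\sigma^F$, this yields the clean coordinate expression
\begin{equation*}
\sigma^F(x)=e^{-2cnx^1}\tilde\sigma(x'),
\end{equation*}
where $\tilde\sigma(x')$ is the B.H. density of $F_1(x',\cdot)$ at $x'$.

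With this decomposition in hand, both equalities in the statement are one-line verifications. For the second equality, I note that $\Psi_t^*F=e^{-2ct}F$ is of the same form as $F$ with $x^1$ replaced by $x^1+t$; applying the formula for $\sigma$ gives $\sigma^{\Psi_t^*F}(x)=e^{-2cn(x^1+t)}\tilde\sigma(x')=e^{-2cnt}\sigma^F(x)$, whence $d\mu^{\Psi_t^*F}_{\mathrm{BH}}=e^{-2cnt}d\mu^F_{\mathrm{BH}}$. For the first equality, I compute $(\Psi_t)^*d\mu^F_{\mathrm{BH}}$ directly: since $\Psi_t$ is a coordinate translation, $(\Psi_t)^*(dx^1\wedge\cdots\wedge dx^n)=dx^1\wedge\cdots\wedge dx^n$, so
\begin{equation*}
(\Psi_t)^*d\mu^F_{\mathrm{BH}}=\sigma^F(x^1+t,x')\,dx^1\cdots dx^n=e^{-2cnt}\sigma^F(x)\,dx^1\cdots dx^n,
\end{equation*}
which matches both the middle and the right-hand expression. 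Points where $V$ vanishes (or where $\Psi_t$ acts trivially) are handled, as in Lemma \ref{lemma-homo-Jacobi}, by continuity.

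The only step that merits any real attention is the linear-algebraic scaling rule $\mathrm{Vol}(\{y:\lambda F(x,y)\le 1\})=\lambda^{-n}\mathrm{Vol}(\{y:F(x,y)\le 1\})$, which is where the dimension factor $n$ in the exponent $-2cnt$ originates. Beyond tracking this $n$-th-power rescaling and checking that no Jacobian is produced by the translation $\Psi_t$ in the adapted coordinates, no genuine obstacle is expected: the whole lemma is essentially the volume-form analogue of (\ref{0006}), and the proof reduces to one coordinate computation together with the standard naturality of the B.H. volume form under diffeomorphisms (which is what makes $(\Psi_t)^*d\mu^F_{\mathrm{BH}}$ equal to $d\mu^{\Psi_t^*F}_{\mathrm{BH}}$ on the nose).
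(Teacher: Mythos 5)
Your proposal is correct and follows exactly the route the paper indicates: the paper gives no written proof, merely asserting the lemma follows ``by similar argument with local coordinates'' as in Lemma \ref{lemma-homo-Jacobi}, and your argument (adapted coordinates with $V=\partial_{x^1}$, the factorization $F=e^{-2cx^1}F_1(x',y)$, the $\lambda^{-n}$ scaling of the indicatrix volume, and continuity at zeros of $V$) is precisely the intended computation.
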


Assume that $V$ is a smooth vector field satisfying $F(x,-V(x))<1$ in some open subset $\mathcal{U}\subset M$. Then the equality
$F(x,y)=\tilde{F}(x,\tilde{y})$
defines a new Finsler metric on $\mathcal{U}$,
in which $\tilde{y}=y+F(x,y)V(x)$ for any $x\in\mathcal{U}$ and $y\in T_xM$
(see Section 5.4 in \cite{SS2016}).
We will call $\tilde{F}$ {\it the metric defined by
navigation}  {\it from the datum} $(F,V)$.

A relation between the fundamental tensors of $F$ and $\tilde{F}$
is revealed by the following lemma (see Lemma 4.4 in \cite{Xu2018} or the equality (5) in \cite{FM2017}).

\begin{lemma}\label{lemma-navigation-fundamental-tensor}
Let $\tilde{F}$ be the Finsler metric on $M$ defined by navigation
from the datum $(F,V)$, then for any $x\in \mathcal{U}$ and nonzero vector $y\in T_xM$, we have
$$\langle u,v\rangle_{\tilde{y}}^{\tilde{F}}=\frac{1}{1+\langle V(x),y\rangle_y^F}
\langle u,v\rangle_y^F,$$
for any $u$ and $v$ in the $g^F_y$-orthogonal complement of $y$ in $T_xM$.
\end{lemma}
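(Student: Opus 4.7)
The plan is to exploit the affine structure of navigation at the level of indicatrices. Since $\tilde y = y + F(x, y) V(x)$ by construction, the $\tilde F$-indicatrix in $T_xM$ is simply the translate $\Sigma_x^{\tilde F} = \Sigma_x^F + V(x)$. After normalizing $F(x, y) = 1$ (by homogeneity, this loses nothing), we have $\tilde y = y + V(x)$, and the translation identifies tangent spaces:
$$T_{\tilde y}\Sigma_x^{\tilde F} \;=\; T_y\Sigma_x^F \;=\; \{w \in T_xM : \langle w, y\rangle_y^F = 0\}.$$
So the same vectors $u, v$ in the statement are simultaneously tangent to both indicatrices.

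Next I would express each side of the claim as a second-order derivative along the common tangent hyperplane. Choose a smooth curve $y(s) \subset \Sigma_x^F$ with $y(0) = y$, $\dot y(0) = u$, and set $\alpha = \ddot y(0)$. Differentiating $F^2(x, y(s)) \equiv 1$ twice at $s=0$, and using the Cartan-type identity $y^j C_{ijk}(x, y) = 0$ (so the base-point variation of the fundamental tensor contributes nothing), yields $g_y^F(u, u) = -\langle \alpha, y\rangle_y^F$. The translated curve $\tilde y(s) = y(s) + V(x)$ lies in $\Sigma_x^{\tilde F}$ with the same initial velocity and acceleration, so the identical argument for $\tilde F$ gives $g_{\tilde y}^{\tilde F}(u, u) = -\langle \alpha, \tilde y\rangle_{\tilde y}^{\tilde F}$.

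What remains is to relate the two linear functionals $\alpha \mapsto \langle \alpha, y\rangle_y^F$ and $\alpha \mapsto \langle \alpha, \tilde y\rangle_{\tilde y}^{\tilde F}$ on $T_xM$. Both vanish on the common hyperplane $T_y\Sigma_x^F$, so they must be proportional: $\langle \cdot, \tilde y\rangle_{\tilde y}^{\tilde F} = \lambda \langle \cdot, y\rangle_y^F$. Evaluating at $\tilde y$ pins down the constant, since $1 = \tilde F(x, \tilde y)^2 = \lambda(1 + \langle V(x), y\rangle_y^F)$ gives $\lambda = 1/(1 + \langle V(x), y\rangle_y^F)$. This yields the claim in the diagonal case $u = v$, and polarization of symmetric bilinear forms promotes it to general $u, v$.

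The main subtlety is the double differentiation of $F^2(x, y(s))$ along the indicatrix: one needs the Cartan-tensor identity $y^j C_{ijk} = 0$ (equivalently $y^j \partial_{y^k} g^F_{ij}(x, y) = 0$, a consequence of homogeneity of $F$) to kill the unwanted term from varying the base point of $g^F_{y(s)}$. Everything else is linear algebra: two covectors annihilating the same hyperplane are proportional, with the ratio fixed by a single evaluation. Strictly speaking, the displayed formula requires $F(x, y) = 1$; for general nonzero $y$ homogeneity inserts an extra factor $F(x, y)/(F(x, y) + \langle V(x), y\rangle_y^F)$ on the right, which collapses to the stated expression after $F$-normalization.
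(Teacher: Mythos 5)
Your proof is correct. The paper does not prove this lemma itself (it cites Lemma 4.4 of \cite{Xu2018} and equality (5) of \cite{FM2017}), and your argument is essentially the standard one given there: the two indicatrices are translates of one another, so they share the tangent hyperplane at $y$ and $\tilde y$ together with the second-order contact data of a curve and its translate, which identifies both fundamental tensors on that hyperplane with the same quadratic form $-\langle \alpha,\cdot\rangle$ up to the scalar fixed by evaluating the two annihilating covectors at $\tilde y$. Your closing remark is also right: as literally printed the formula is scale-inconsistent in $y$ and should be read for $F(x,y)=1$ (which is how it is used throughout the paper), the general case carrying the factor $F(x,y)/(F(x,y)+\langle V(x),y\rangle^F_y)$.
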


In each tangent space $T_xM$ for $x\in\mathcal{U}$, the indicatrix
$S^{\tilde{F}}_xM=\{y\in T_xM| \tilde{F}(y)=1\}$
is a parallel shifting of the indicatrx $S^F_xM$ by the
vector $V(x)$, so it is easy to prove (see Proposition 5.3 in \cite{SS2016})
\begin{lemma}\label{lemma-navi-BH-volume}
Let $\tilde{F}$ be the Finsler metric on $M$ defined by navigation
from the datum $(F,V)$. Then $d\mu^F_{\mathrm{BH}}=d\mu^{\tilde{F}}_{\mathrm{BH}}$
inside $\mathcal{U}$.
\end{lemma}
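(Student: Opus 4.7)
The plan is to show directly that the Euclidean volume of the closed unit tangent ball is the same for $F$ and $\tilde{F}$ at every point of $\mathcal{U}$, which by the very definition of $\sigma^F$ and $\sigma^{\tilde{F}}$ forces $\sigma^F = \sigma^{\tilde{F}}$ in any standard local coordinates, and hence yields the claimed equality of B.H.\ volume forms.

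First, I would fix $x\in\mathcal{U}$ and standard local coordinates $(x^i)$, and identify $T_xM$ with $\mathbb{R}^n$. Write $B^F_x=\{y\in T_xM : F(x,y)\le 1\}$ and $B^{\tilde F}_x=\{y\in T_xM : \tilde F(x,y)\le 1\}$ for the closed unit balls, with boundaries equal to the indicatrices $S^F_xM$ and $S^{\tilde F}_xM$. The central step is to prove the translation identity
\begin{equation*}
B^{\tilde F}_x \;=\; B^F_x + V(x).
\end{equation*}
On the indicatrix this is immediate from the navigation definition $\tilde F(x,y+F(x,y)V(x)) = F(x,y)$: when $F(x,y)=1$ the navigation map reduces to $y\mapsto y+V(x)$, and it sends $S^F_xM$ bijectively onto $S^{\tilde F}_xM$. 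Thus $\partial B^{\tilde F}_x = \partial B^F_x + V(x)$.

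Next I would upgrade this boundary equality to the equality of the full bodies. Both $B^{\tilde F}_x$ and $B^F_x + V(x)$ are compact convex sets with the same boundary hypersurface, so each is the unique bounded region enclosed by that boundary containing some interior point. The hypothesis $F(x,-V(x))<1$ gives $-V(x)\in B^F_x$, hence $0\in B^F_x+V(x)$; and $0\in B^{\tilde F}_x$ trivially. Therefore both sets contain the origin in their interior and must coincide. Translation invariance of the Lebesgue measure in the coordinate tangent space then gives $\mathrm{Vol}(B^{\tilde F}_x)=\mathrm{Vol}(B^F_x)$, so $\sigma^F(x)=\sigma^{\tilde F}(x)$ in these coordinates, and the two B.H.\ densities agree throughout $\mathcal{U}$.

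The main obstacle is the ball identification $B^{\tilde F}_x = B^F_x + V(x)$. It cannot be read off from the navigation map directly, since $y\mapsto y+F(x,y)V(x)$ is \emph{not} a translation except on the indicatrix; the argument genuinely uses the convexity of Minkowski balls together with the assumption $F(x,-V(x))<1$ to place the origin inside the translated body. Once this geometric step is in place, the rest is formal and the lemma follows.
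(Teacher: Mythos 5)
Your proposal is correct and follows essentially the same route as the paper, which simply observes that $S^{\tilde F}_xM$ is the parallel shift of $S^F_xM$ by $V(x)$ and then invokes translation invariance of the Euclidean volume in the definition of $\sigma^F$ (citing Proposition 5.3 of Shen--Shen). You merely make explicit the step from the boundary translation to the equality of the convex bodies, which the paper leaves to the reader.
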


\section{Geodesic and Jacobi field correspondences}
\label{sec-4}
Unless otherwise specified, we keep the following setup for the rest of the paper.
Let $V$ is a homothetic vector field on
the Finsler manifold $(M,F)$ with dilation $c\neq0$.
We will fix a point $x_0\in M$ with $F(x_0,-V(x_0))<1$ and restrict our discussion to a sufficiently small neighborhood $\mathcal{U}$ of $x_0$ where the metric $\tilde{F}$ can be defined
in $\mathcal{U}$ by navigation from the datum $(F,V)$. The parameter $t$ for a unit speed geodesic passing $x\in\mathcal{U}$ when $t=0$, or
for the local homothetic translations $\Psi_t$ generated by $V$,
is understood to be sufficiently close to zero.

In \cite{HM2011}, L. Huang and X. Mo proved the following correspondence between
unit speed geodesics before and after a homothetic navigation.

\begin{theorem}\label{thm-1}
For any unit speed
geodesic $\gamma(t)$ for the metric $F$ with
$\gamma(0)=x\in \mathcal{U}$,
$\tilde{\gamma}(t)=\Psi_t(\gamma(\frac{e^{2ct}-1}{2c}))$ is a unit speed geodesic for the metric $\tilde{F}$ with $\tilde{\gamma}(0)=x$.
Conversely,
any unit speed geodesic $\tilde{\gamma}(t)$ for the metric $\tilde{F}$ with $\tilde{c}(0)=x$
can be presented in this way.
\end{theorem}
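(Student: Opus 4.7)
The plan is to show that the curve $\tilde\gamma(t)=\Psi_t(\gamma(s(t)))$, with $s(t)=(e^{2ct}-1)/(2c)$, is a unit $\tilde F$-speed $\tilde F$-geodesic starting at $x$ with $\dot{\tilde\gamma}(0)=\dot\gamma(0)+V(x)$. Differentiating by the chain rule, using $\partial_t\Psi_t(y)=V(\Psi_t(y))$ and $\dot s(t)=e^{2ct}$, one obtains
\begin{equation*}
\dot{\tilde\gamma}(t)=V(\tilde\gamma(t))+e^{2ct}(\Psi_t)_*\dot\gamma(s(t)),
\end{equation*}
so $\dot{\tilde\gamma}(t)-V(\tilde\gamma(t))$ has $F$-norm $e^{2ct}\cdot e^{-2ct}\cdot 1=1$ by the homothety identity (3.6), and hence $\tilde F(\dot{\tilde\gamma}(t))\equiv 1$ by the navigation definition. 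The reparametrization $s(t)$ is conceptually the unique one that cancels the scaling $e^{-2ct}$ in (3.6), which is why it appears.

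At $t=0$ one reads off $\tilde\gamma(0)=x$ and $\dot{\tilde\gamma}(0)=\dot\gamma(0)+V(x)$; as $\dot\gamma(0)$ varies over the $F$-indicatrix $S_xM$, its translate $\dot\gamma(0)+V(x)$ sweeps out the whole $\tilde F$-indicatrix at $x$. Once the geodesic property below is established, the ``conversely'' half follows immediately from uniqueness of geodesics with prescribed initial conditions.

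The crucial step is verifying $D^{\tilde F}_{\dot{\tilde\gamma}}\dot{\tilde\gamma}\equiv 0$. Because $V$ is homothetic, the construction is equivariant under time translation---replacing $\gamma(\cdot)$ by $\gamma(\cdot+s_0)$ and shifting $t$ accordingly produces a time translate of $\tilde\gamma$---so it suffices to check the equation at $t=0$. I would expand $\dot{\tilde\gamma}(t)=V(\tilde\gamma(t))+w(t)$ with $w(t)=e^{2ct}(\Psi_t)_*\dot\gamma(s(t))$, then apply Lemma 3.4 to translate $g^{\tilde F}_{\dot{\tilde\gamma}(0)}$ into $g^F_{\dot\gamma(0)}$ on the $\dot\gamma(0)$-orthogonal complement, Lemma 3.2 to control the component along $\dot\gamma(0)$ via the linear identity $\langle V(\gamma(t)),\dot\gamma(t)\rangle^F_{\dot\gamma(t)}=c_0-2ct$, and $D^F_{\dot\gamma}\dot\gamma=0$. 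I expect the main obstacle to be exactly this last step: $D^{\tilde F}$ couples all directions whereas Lemma 3.4 only pins down the transverse behavior to $\dot\gamma$, so the $\dot\gamma$-component requires separate handling. The key cancellation is between the factor $2c$ produced by differentiating $e^{2ct}$ at $t=0$ and the slope $-2c$ from Lemma 3.2---precisely the balance that the time rescaling $s(t)$ has been engineered to provide.
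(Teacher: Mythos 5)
Your first half---the chain-rule identity $\dot{\tilde\gamma}(t)=V(\tilde\gamma(t))+(\Psi_t)_*\bigl(\tfrac{d}{dt}\gamma(\tfrac{e^{2ct}-1}{2c})\bigr)$, the unit-$\tilde F$-speed computation via the homothety identity (\ref{0006}), and the ``conversely'' part by letting $\dot\gamma(0)+V(x)$ exhaust the $\tilde F$-indicatrix---coincides with the paper's argument and is fine. The gap is exactly where you predict it: the geodesic property itself. You propose to verify $D^{\tilde F}_{\dot{\tilde\gamma}}\dot{\tilde\gamma}=0$ at $t=0$ from Lemma \ref{lemma-navigation-fundamental-tensor} and Lemma \ref{lemma-homo-Jacobi}, but these only relate the fundamental tensors $g^F_y$ and $g^{\tilde F}_{\tilde y}$ pointwise, and Lemma \ref{lemma-navigation-fundamental-tensor} only on the $g^F_y$-orthogonal complement of $y$. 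The covariant derivative $D^{\tilde F}_{\dot{\tilde\gamma}}$ is built from the spray coefficients of $\tilde F$, i.e.\ from first-order $x$- and $y$-derivatives of $g^{\tilde F}$ in \emph{all} directions; none of the ingredients you list control these, and relating the spray of $\tilde F$ to that of $F$ is precisely the heavy computation (as in Huang--Mo) that this paper is designed to circumvent. Not only the $\dot\gamma$-component but also the transverse components are out of reach by these means, so the ``crucial step'' remains a plan rather than a proof. (Your reduction to $t=0$ by equivariance is correct but does not help with this.)

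The paper's route never touches the ODE. Having established that $\tilde\gamma$ has unit $\tilde F$-speed, it invokes the definition of geodesics by the locally minimizing principle: the navigation correspondence is a bijection between $\tilde F$-unit-speed curves and $F$-unit-speed curves, and the reparametrization $s(t)=\tfrac{e^{2ct}-1}{2c}$ forces \emph{every} $\tilde F$-unit-speed curve on $[t',t'']$ to pull back to an $F$-unit-speed curve of the same $F$-length $\tfrac{1}{2c}(e^{2ct''}-e^{2ct'})$ with the corresponding endpoints. A competitor to $\tilde\gamma|_{[t',t'']}$ would therefore produce a second $F$-curve with the same endpoints and length as $\gamma$, contradicting the uniqueness of the short minimizing $F$-geodesic. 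If you wish to keep the ODE approach, you need an additional genuine comparison of the two geodesic sprays (or of the associated Hamiltonians on $T^*M$); otherwise, replace your ``crucial step'' by the minimization argument above.
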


Following a similar thought as in \cite{FM2017}, we propose a conceptional proof of it.

\begin{proof}
Firstly we assume $\gamma(t)$ is a unit speed geodesic on $(M,F)$ and prove  $\tilde{\gamma}(t)$ is a unit speed geodesic for the metric $\tilde{F}$.

Direct calculation shows
$$F(\gamma(\frac{e^{2ct}-1}{2c}),
\frac{d}{dt}{\gamma}(\frac{e^{2ct}-1}{2c}))=
F(\gamma(\frac{e^{2ct}-1}{2c}),e^{2ct}\dot{\gamma}(
\frac{e^{2ct}-1}{2c}))
=e^{2ct}.$$
By (\ref{0006}), we have
\begin{eqnarray*}
& &F(\Psi_t(\gamma(\frac{e^{2ct}-1}{2c})),(\Psi_t)_*
(\frac{d}{dt}{\gamma}(\frac{e^{2ct}-1}{2c})))=
 e^{-2ct}
F(\gamma(\frac{e^{2ct}-1}{2c}),
\frac{d}{dt}{\gamma}(\frac{e^{2ct}-1}{2c}))\\
& =&e^{-2ct}F(\gamma(\frac{e^{2ct}-1}{2c}),
e^{2ct}\dot{\gamma}(\frac{e^{2ct}-1}{2c}))=
F(\gamma(\frac{e^{2ct}-1}{2c}),
\dot{\gamma}(\frac{e^{2ct}-1}{2c})=1,
\end{eqnarray*}
and by the notion of navigation,
$$\dot{\tilde{\gamma}}(t)=
(\Psi_t)_*
(\frac{d}{dt}{\gamma}(\frac{e^{2ct}-1}{2c}))
+V(\tilde{\gamma}(t))$$
is a $\tilde{F}$-unit tangent vector.
To summarize,
$\tilde{\gamma}(t)$ is
a $\tilde{F}$-unit speed curve.

Assume conversely that $\tilde{\gamma}(t)$ is
not a geodesic, i.e., the local minimizing principle is not valid
somewhere on $\tilde{\gamma}(t)$, then we can find a pair of real numbers $t'$ and $t''$, with $t'<t''$ sufficiently close to each other, and satisfying the following:
\begin{enumerate}
\item the segment of $\gamma(t)$ with $t\in[\frac{e^{2ct'}-1}{2c},\frac{e^{2ct''}-1}{2c}]$ is the unique minimizing geodesic between its end points for the metric $F$.
\item $\tilde{\gamma}(t)$ with $t\in[t',t'']$ is not minimizing
for the metric $\tilde{F}$.
\end{enumerate}

Because of (2),
we can find another $\tilde{F}$-unit speed smooth curve $\tilde{\gamma}_1(t)$
    such that it
    coincides with $\tilde{\gamma}(t)$ when $t\notin(t',t'')$.
From $\tilde{\gamma}_1(t)$, we can trace back to find an
 $F$-unit speed smooth curve
$\gamma_1(t)$, such that $\tilde{\gamma}_1(t)=\Psi_t(\gamma_1(\frac{e^{2ct}-1}{2c}))$. The curve $\gamma_1(t)$ is different from $\gamma(t)$. But both coincide  when $t\notin(\frac{e^{2ct'}-1}{2c},\frac{e^{2ct''}-1}{2c})$ and
have the same $F$-length
$\frac{1}{2c}(e^{2ct''}-e^{2ct'})$
for the segment $t\in[\frac{e^{2ct'}-1}{2c},\frac{e^{2ct''}-1}{2c}]$.
This is a contradiction to (1). So $\tilde{\gamma}(t)$ must be a geodesic
for the metric $\tilde{F}$.

This proves the first statement in Theorem \ref{thm-1}.

To prove the other statement in Theorem \ref{thm-1}, we observe that at $\tilde{\gamma}(0)=\gamma(0)=x$,
$\dot{\tilde{\gamma}}(0)=\dot{\gamma}(0)+V(x)$ can exhaust all
$\tilde{F}$-unit tangent vectors. All other arguments are similar.
\end{proof}
\medskip

Using Theorem \ref{thm-1}, we can prove a correpondence between the orthogonal Jacobi fields for the metrics $F$ and $\tilde{F}$ respectively.

\begin{theorem}\label{thm-2}
For any orthogonal Jacobi field $J(t)$ along the unit speed geodesic $\gamma(t)$ for the metric $F$,
\begin{equation}\label{0002}
\tilde{J}(t)=(\Psi_t)_* (J(\frac{e^{2ct}-1}{2c}))
\end{equation}
defines an orthogonal Jacobi
field along the unit speed geodesic $\tilde{\gamma}(t)=\Psi_t(\gamma(\frac{e^{2ct}-1}{2c}))$ for the metric $\tilde{F}$.
Conversely, any orthogonal
Jacobi field $\tilde{J}(t)$ along $\tilde{\gamma}(t)$ for the metric $\tilde{F}$
can be presented by (\ref{0002}) for some orthogonal
Jacobi field $J(t)$ along $\gamma(t)$ for the metric $F$.
\end{theorem}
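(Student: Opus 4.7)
The cleanest approach is to realize $J$ as the variation field of a one-parameter family of unit speed $F$-geodesics, transport the whole family through Theorem~\ref{thm-1}, and read off $\tilde{J}$ as the variation field of the resulting unit speed $\tilde{F}$-geodesic family. Since any variation field of a smooth family of constant speed geodesics is automatically a Jacobi field, the only substantive issue to verify is the $g^{\tilde{F}}_{\dot{\tilde\gamma}(t)}$-orthogonality of $\tilde{J}$.

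First I would invoke Lemma~\ref{lemma-Jacobi-realization}, the realization result promised in the preliminary section, to write
\[
J(t)=\left.\frac{\partial}{\partial s}\right|_{s=0}\gamma_s(t)
\]
for a smooth family $\gamma_s$ of unit speed $F$-geodesics with $\gamma_0=\gamma$. Orthogonality of $J$ is exactly what allows a \emph{unit speed} realization: differentiating $\langle J,\dot\gamma\rangle^F_{\dot\gamma}\equiv 0$ via Lemma~\ref{lemma-covariant-derivative}, together with $D^F_{\dot\gamma}\dot\gamma=0$, yields $\langle D^F_{\dot\gamma}J,\dot\gamma\rangle^F_{\dot\gamma}=0$, which is the infinitesimal unit speed condition. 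Applying Theorem~\ref{thm-1} pointwise in $s$ then produces a smooth family
\[
\tilde{\gamma}_s(t)=\Psi_t\!\left(\gamma_s\!\left(\tfrac{e^{2ct}-1}{2c}\right)\right)
\]
of unit speed $\tilde{F}$-geodesics whose $s$-variation field at $s=0$ is precisely $(\Psi_t)_*J(\tau(t))=\tilde{J}(t)$, with $\tau(t)=(e^{2ct}-1)/(2c)$. Thus $\tilde{J}$ is automatically an $\tilde{F}$-Jacobi field along $\tilde{\gamma}=\tilde{\gamma}_0$.

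The main obstacle is orthogonality. The crucial geometric observation is that the $\tilde{F}$-indicatrix at any $x\in\mathcal{U}$ is the translate of the $F$-indicatrix by $V(x)$, so the tangent plane to the $F$-indicatrix at an $F$-unit vector $y$ coincides as a subspace of $T_xM$ with the tangent plane to the $\tilde{F}$-indicatrix at $\tilde{y}=y+V(x)$; equivalently, the $g^F_y$-orthogonal complement of $y$ equals the $g^{\tilde{F}}_{\tilde{y}}$-orthogonal complement of $\tilde{y}$. Setting $y=\dot\gamma(\tau(t))$, the computation from the proof of Theorem~\ref{thm-1} gives $\dot{\tilde\gamma}(t)=y''+V(\tilde\gamma(t))$ with $y'':=e^{2ct}(\Psi_t)_*y$, and by (\ref{0006}) this $y''$ is $F$-unit at $\tilde\gamma(t)$. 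Because $g^F$ is invariant under positive rescaling of the base vector, applying (\ref{0011}) yields
\[
\langle\tilde{J}(t),y''\rangle^F_{y''}=e^{2ct}\langle(\Psi_t)_*J(\tau(t)),(\Psi_t)_*y\rangle^F_{(\Psi_t)_*y}=e^{-2ct}\langle J(\tau(t)),y\rangle^F_y=0,
\]
and the indicatrix identification translates this into $\langle\tilde{J}(t),\dot{\tilde\gamma}(t)\rangle^{\tilde{F}}_{\dot{\tilde\gamma}(t)}=0$.

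For the converse, Theorem~\ref{thm-1} is a bijection between unit speed $F$-geodesics and unit speed $\tilde{F}$-geodesics through $x$, so any orthogonal $\tilde{F}$-Jacobi field $\tilde{J}$ is realized as the variation field of a unit speed $\tilde{F}$-geodesic family, which lifts through the inverse correspondence to a unit speed $F$-geodesic family producing an $F$-Jacobi field $J$ along $\gamma$ satisfying (\ref{0002}); the same chain of identifications run in reverse shows that $J$ is orthogonal.
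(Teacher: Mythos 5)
Your proposal is correct and follows essentially the same route as the paper: orthogonality is obtained from the fact that $\Psi_t$ preserves tangency to the $F$-indicatrix together with the observation that the $\tilde{F}$-indicatrix is the $F$-indicatrix translated by $V$, and the Jacobi property is obtained by realizing $J$ as the variation field of a family of unit speed $F$-geodesics (Lemma~\ref{lemma-Jacobi-realization}) and pushing the whole family through Theorem~\ref{thm-1}. The only difference is cosmetic: you make the homothety computation via (\ref{0011}) explicit where the paper argues directly with tangency of indicatrices.
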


\begin{proof}
Firstly, we assume $J(t)$ is an orthogonal Jacobi field along the unit speed geodesic $\gamma(t)$ for the metric $F$ and prove $\tilde{J}(t)$ is
an orthogonal Jacobi field along $\tilde{\gamma}(t)$ for the metric $\tilde{F}$.

The orthogonal property of $J(t)$ can be equivalently described as
the claim that
$J(\frac{e^{2ct}-1}{2c})$ is tangent to the indicatrix of ${F}$ in
$T_{\gamma(\frac{e^{2ct}-1}{2c})}M$ at $\dot{\gamma}(\frac{e^{2ct}-1}{2c})$.
Since $\Psi_t$ is a local
homothetic translation, $\tilde{J}(t)=(\Psi_t)_*(J(\frac{e^{2ct}-1}{2c}))$
is also tangent to the indicatrix of $F$ in $T_{\tilde{\gamma}(t)}M$
at $(\Psi_t)_*(\frac{d}{dt}\gamma(\frac{e^{2ct}-1}{2c}))$.
Since the indicatrix of $\tilde{F}$ is a parallel shifting of that of $F$ by the value of $V$, $\tilde{J}(t)$
is tangent to the indicatrix of $\tilde{F}$ in $T_{\tilde{\gamma}(t)}M$ at the $\tilde{F}$-unit vector
$\dot{\tilde{\gamma}}(t)=(\Psi_t)_*(\frac{d}{dt}\gamma(\frac{e^{2ct}-1}{2c}))
+V(\tilde{\gamma}(t))$ as well.
To summarize, we have proved the orthogonal property for $\tilde{J}(t)$. Then we will prove $\tilde{J}(t)$ is a Jacobi
field for the metric $\tilde{F}$.

By Lemma \ref{lemma-Jacobi-realization} below, $J(t)$ can be realized as $J(t)=\frac{\partial}{\partial s}\gamma(s,t)|_{s=0}$ for a smooth variation $\gamma(s,t)$ of $\gamma(t)=\gamma(0,t)$, such that for each $s$, $\gamma(s,t)$ is a unit speed geodesic for the metric $F$. Then we have
$\tilde{J}(t)=\frac{\partial}{\partial s}\tilde{\gamma}(s,t)|_{s=0}$, where
$\tilde{\gamma}(s,t)=\Psi_t(\gamma(s,\frac{e^{2ct}-1}{2c}))$. By Theorem \ref{thm-1},
$\tilde{\gamma}(s,t)$ is a smooth variation of $\tilde{\gamma}(t)=\tilde{\gamma}(0,t)$, such that for each $s$, $\tilde{\gamma}(s,t)$ is a unit speed geodesic for the metric $\tilde{F}$. So $\tilde{J}(t)$ is a Jacobi field along $\tilde{\gamma}(t)$ for the metric $\tilde{F}$.

This argument proves the first statement in Theorem \ref{thm-2}.
The proof for the second statement is similar.
\end{proof}

\begin{lemma}\label{lemma-Jacobi-realization}
For any orthogonal Jacobi field $J(t)$ along the unit speed geodesic $\gamma(t)$ for the metric $F$ satisfying $\gamma(0)=x$, we can find
a smooth map $\gamma(s,t)$ with $s\in(-\epsilon,\epsilon)$ such that for each fixed $s$, $\gamma(s,t)$ is a unit speed geodesic for the metric $F$, $\gamma(0,t)=\gamma(t)$ and $\frac{\partial}{\partial s}\gamma(0,t)=J(t)$.
\end{lemma}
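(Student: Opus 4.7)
The plan is to realize $J(t)$ as the variation field of an exponential-map family of unit speed geodesics, whose initial data at $t=0$ are tuned to match $J(0)$ and $D^F_{\dot{\gamma}}J(0)$. Concretely, I would pick a smooth curve $\alpha:(-\epsilon,\epsilon)\to M$ with $\alpha(0)=x$ and $\dot{\alpha}(0)=J(0)$, choose a smooth vector field $Y(s)$ along $\alpha$ with $F(\alpha(s),Y(s))\equiv 1$ satisfying the initial conditions $Y(0)=\dot{\gamma}(0)$ and $D^F_{\dot{\alpha}}Y|_{s=0}=D^F_{\dot{\gamma}}J(0)$, and then set
$$\gamma(s,t):=\exp^F_{\alpha(s)}(tY(s)).$$
By construction each $t$-curve $\gamma(s,\cdot)$ is a geodesic for $F$ with $F$-unit initial velocity $Y(s)$, hence a unit speed geodesic; also $\gamma(0,t)=\exp^F_x(t\dot{\gamma}(0))=\gamma(t)$.

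The delicate step is the construction of $Y(s)$. I would first take an arbitrary smooth extension $\tilde{Y}(s)$ along $\alpha$ with $\tilde{Y}(0)=\dot{\gamma}(0)$ and $D^F_{\dot{\alpha}}\tilde{Y}|_{s=0}=D^F_{\dot{\gamma}}J(0)$ (this is possible locally: parallel-translate $\dot{\gamma}(0)$ along $\alpha$ using the Chern connection with a fixed reference vector, then add $s$ times an arbitrary extension of $D^F_{\dot{\gamma}}J(0)$). Then rescale by $Y(s):=\tilde{Y}(s)/F(\alpha(s),\tilde{Y}(s))$. Differentiating $F^2(\alpha(s),\tilde{Y}(s))$ at $s=0$ gives $2\langle D^F_{\dot{\gamma}}J(0),\dot{\gamma}(0)\rangle^F_{\dot{\gamma}(0)}$, which vanishes because orthogonality of $J$ together with Lemma \ref{lemma-covariant-derivative} and $D^F_{\dot{\gamma}}\dot{\gamma}=0$ forces $D^F_{\dot{\gamma}}J(t)\perp\dot{\gamma}(t)$ at every $t$. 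Hence the normalization leaves both $Y(0)$ and $D^F_{\dot{\alpha}}Y|_{s=0}$ unchanged, so $Y$ has exactly the prescribed zeroth and first order data and is $F$-unit throughout.

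Finally I would check that the variation field $\tilde{J}(t):=\frac{\partial}{\partial s}\gamma(s,t)|_{s=0}$ equals $J(t)$. Since $\gamma(s,\cdot)$ is a geodesic for each $s$, a standard argument (applying the symmetry lemma for the Chern connection with reference vector $\partial_t\gamma$, which is torsion-free along the two-parameter map) shows $\tilde{J}(t)$ satisfies the Jacobi equation along $\gamma(t)$. Its initial value is $\tilde{J}(0)=\dot{\alpha}(0)=J(0)$, and its initial covariant derivative is
$$D^F_{\dot{\gamma}}\tilde{J}(0)=D^F_{\partial_t}\partial_s\gamma\big|_{(0,0)}=D^F_{\partial_s}\partial_t\gamma\big|_{(0,0)}=D^F_{\dot{\alpha}}Y|_{s=0}=D^F_{\dot{\gamma}}J(0).$$
By uniqueness for the linear Jacobi ODE, $\tilde{J}\equiv J$, which gives the desired realization.

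The step I expect to require the most care is the symmetry-of-mixed-covariant-derivatives identity in the Finsler setting, where one must be explicit about the reference vector used by the Chern connection; once this is pinned down the rest of the argument is formally identical to the Riemannian case.
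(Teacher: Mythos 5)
Your proposal is correct in outline, but it takes a genuinely different route from the paper. You realize $J$ by prescribing its full initial data $(J(0),D^F_{\dot{\gamma}}J(0))$ and building the variation $\gamma(s,t)=\exp^F_{\alpha(s)}(tY(s))$, then invoking the symmetry of mixed covariant derivatives for two-parameter maps and uniqueness for the Jacobi ODE from initial conditions. The paper instead uses a two-point argument: it picks $t_1<0<t_2$ close to $0$, joins curves $\gamma_1(s),\gamma_2(s)$ (tangent to $J(t_1),J(t_2)$) by the unique minimizing unit speed geodesics $\gamma(s,\cdot)$, uses the first variation formula together with $\langle J(t_i),\dot{\gamma}(t_i)\rangle^F_{\dot{\gamma}(t_i)}=0$ to see that the length $l(s)$ is critical at $s=0$ (so the endpoint matching survives the unit speed reparametrization), and then concludes $\bar{J}\equiv J$ because a Jacobi field between non-conjugate points is determined by its two boundary values. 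The paper's route avoids the Chern-connection symmetry lemma and the reference-vector bookkeeping entirely, at the cost of restricting to a short parameter interval and invoking non-conjugacy; your route is more constructive and determines the field from initial data, but the steps you flag as delicate are real: the identity $\tfrac{d}{ds}F^2(\alpha(s),\tilde{Y}(s))=2\langle D^F_{\dot{\alpha}}\tilde{Y},\tilde{Y}\rangle^F_{\tilde{Y}}$ and the symmetry $D^F_{\partial_t}\partial_s\gamma=D^F_{\partial_s}\partial_t\gamma$ both require the reference vector to be chosen consistently (as $\partial_t\gamma$, which at $t=0$ is $Y(s)$), and your orthogonality argument $\langle D^F_{\dot{\gamma}}J,\dot{\gamma}\rangle^F_{\dot{\gamma}}\equiv 0$ via Lemma \ref{lemma-covariant-derivative} is exactly what makes the unit-normalization of $Y$ harmless to first order. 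With those points pinned down your proof is complete and equally valid.
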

\begin{proof}
Let $t_1$ and $t_2$ with $t_1<0<t_2$ be real numbers which are sufficiently close to 0, and $\gamma_i(s)$ smooth curves
with $\gamma_i(0)=\gamma(t_i)$ and $\dot{\gamma}_i(0)=J(t_i)$, for $i=1$ and $2$ respectively. For each fixed $s$ sufficiently close to $0$, there exists a unique unit speed geodesic $\gamma(s,t)$ from $\gamma_1(s)$ to $\gamma_2(s)$, which can be suitably extended and parametrized such that $\gamma(s,t_1)=\gamma_1(s)$.

We only need to prove that $J(t)$ coincides with
the Jacobi field $\bar{J}(t)=\frac{\partial}{\partial s}\gamma(0,t)$. For each fixed $s$, we denote $l(s)=d_F(\gamma_1(s),\gamma_2(s))$ the distance from $\gamma_1(s)$ to $\gamma_2(s)$.
By the orthogonal property of $J(t)$, i.e., $$\langle J(t_1),\dot{\gamma}(t_1)\rangle_{\dot{\gamma}(t_1)}^F
=\langle J(t_2),\dot{\gamma}(t_2)\rangle^F_{\dot{\gamma}(t_2)}=0,$$ the first variation indicates $\frac{d}{ds}l(0)=0$. So we have  $\bar{J}(t_2)=\dot{\gamma}_2(0)=J(t_2)$. Meanwhile we also have
$\bar{J}(t_1)=\dot{\gamma}_1(0)=J(t_1)$. When $t_1$ and $t_2$ are sufficiently close to $0$, $\gamma(t_2)$ is not a conjugate point of $\gamma(t_1)$ along the geodesic $\gamma(t)$, i.e., the values at $t=t_1$ and $t_2$ uniquely determine the Jacobi field. So we must have $J(t)\equiv\bar{J}(t)$,
which ends the proof of this lemma.
\end{proof}

For any tangent vector $u$ in the $g^F_{\dot{\gamma}(0)}$-orthogonal complement of $\dot{\gamma}(0)$ at $\gamma(0)=x$,
we denote $\mathcal{J}^F_{\gamma;u}$ the set of all orthogonal
Jacobi fields $J(t)$ along $\gamma(t)$ for the metric $F$, satisfying $J(0)=u$. Then Theorem \ref{thm-2} immediately implies the following

\begin{corollary}\label{coro-thm-2}
The correspondence from $J(t)$ to $\tilde{J}(t)$ in Theorem \ref{thm-2}
is one-to-one between $\mathcal{J}^F_{\gamma;u}$
and $\mathcal{J}^{\tilde{F}}_{\tilde{\gamma};u}$.
\end{corollary}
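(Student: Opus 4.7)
The plan is to show the claim follows almost directly from Theorem \ref{thm-2} once we check that the correspondence respects the base-point condition $J(0)=u$, and that the transformation $J\mapsto \tilde J$ is invertible at each time $t$.

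First I would evaluate the formula $\tilde{J}(t)=(\Psi_t)_*(J(\tfrac{e^{2ct}-1}{2c}))$ at $t=0$. Since $\Psi_0=\mathrm{id}_M$ and $\tfrac{e^{2ct}-1}{2c}$ vanishes at $t=0$, this gives $\tilde J(0)=J(0)$. Hence, if $J(0)=u$, then $\tilde J(0)=u$, so the correspondence restricts to a well-defined map from $\mathcal{J}^F_{\gamma;u}$ into $\mathcal{J}^{\tilde F}_{\tilde\gamma;u}$.

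For injectivity, suppose $J_1,J_2\in\mathcal{J}^F_{\gamma;u}$ yield the same $\tilde J_1=\tilde J_2$. Applying $(\Psi_t)_*^{-1}$ at each $t$ gives $J_1(\tfrac{e^{2ct}-1}{2c})=J_2(\tfrac{e^{2ct}-1}{2c})$; since the reparametrization $t\mapsto \tfrac{e^{2ct}-1}{2c}$ is a diffeomorphism of a neighborhood of $0$ onto a neighborhood of $0$, this forces $J_1\equiv J_2$. For surjectivity, I would invoke the second (converse) statement in Theorem \ref{thm-2}: given $\tilde J\in\mathcal{J}^{\tilde F}_{\tilde\gamma;u}$, there exists an orthogonal Jacobi field $J$ along $\gamma$ with $\tilde J(t)=(\Psi_t)_*(J(\tfrac{e^{2ct}-1}{2c}))$, and evaluating at $t=0$ yields $J(0)=\tilde J(0)=u$, so $J\in\mathcal{J}^F_{\gamma;u}$.

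There is no real obstacle here; the only thing to be careful about is verifying that the value-at-zero condition is preserved in both directions, which is immediate from $\Psi_0=\mathrm{id}$ and the vanishing of $\tfrac{e^{2ct}-1}{2c}$ at $t=0$. The statement is essentially a bookkeeping corollary of Theorem \ref{thm-2}, with the linearity of each $(\Psi_t)_*$ and the diffeomorphism property of the time reparametrization providing the two-sided inverse on the nose.
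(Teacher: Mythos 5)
Your proposal is correct and follows exactly the route the paper intends: the paper gives no separate proof, stating only that Theorem \ref{thm-2} ``immediately implies'' the corollary, and your verification that $\tilde{J}(0)=J(0)=u$ (via $\Psi_0=\mathrm{id}$ and the vanishing of $\frac{e^{2ct}-1}{2c}$ at $t=0$), together with injectivity from the invertibility of $(\Psi_t)_*$ and the time reparametrization and surjectivity from the converse part of Theorem \ref{thm-2}, is precisely the intended bookkeeping.
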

\section{Proofs of Theorem \ref{main-thm-1} and Theorem \ref{main-thm-6}}
To prove Theorem \ref{main-thm-1}, we need the following description of flag curvature
by Jacobi fields.

\begin{lemma} \label{lemma-flag-curv-Jacobi}
Let $\gamma(t)$ be a unit speed geodesic for the metric $F$
with $\gamma(0)=x\in M$ and $\dot{\gamma}(0)=y\in T_xM$. Suppose
the tangent plane $\mathbf{P}\subset T_xM$ is spanned by $y$ and the nonzero vector $u$ satisfying $\langle u,y\rangle_y^F=0$.
Then we have
\begin{equation}\label{0010}
K^F(x,y,\mathbf{P})=(\langle u,u\rangle_y^F)^{-1/2}
\max_{J(t)\in\mathcal{J}^F_{\gamma;u}} \{-\frac{d^2}{dt^2}|_{t=0}[(\langle J(t),J(t)
\rangle_{\dot{\gamma}(t)}^F)^{1/2}]\},
\end{equation}
\end{lemma}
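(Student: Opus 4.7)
The plan is to Taylor expand $f(t) := (\langle J(t), J(t)\rangle^F_{\dot\gamma(t)})^{1/2}$ at $t=0$ for each $J \in \mathcal{J}^F_{\gamma;u}$ and then optimize $-f''(0)$. A preliminary reduction: combining Lemma \ref{lemma-covariant-derivative} with $D^F_{\dot\gamma}\dot\gamma = 0$, the Jacobi equation $D^F_{\dot\gamma}D^F_{\dot\gamma}J = -R^F_{\dot\gamma}J$, and the standard identity $\langle R^F_y v, y\rangle^F_y = 0$ (a consequence of $R^F_y y = 0$ and the $g^F_y$-symmetry of $R^F_y$), one sees that $t\mapsto \langle J(t),\dot\gamma(t)\rangle^F_{\dot\gamma(t)}$ is affine in $t$. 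Orthogonality of $J$ is therefore equivalent to the two scalar conditions $\langle u, y\rangle^F_y = 0$ (given) and $\langle D^F_{\dot\gamma(0)} J(0), y\rangle^F_y = 0$, so $\mathcal{J}^F_{\gamma;u}$ is parametrized by letting $w := D^F_{\dot\gamma(0)} J(0)$ range freely over the $g^F_y$-orthogonal complement $y^\perp \subset T_xM$.

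Next, set $g(t) := \langle J(t), J(t)\rangle^F_{\dot\gamma(t)}$. Two applications of Lemma \ref{lemma-covariant-derivative}, followed by one use of the Jacobi equation, give
\begin{equation*}
g(0) = \langle u,u\rangle^F_y, \qquad g'(0) = 2\langle u,w\rangle^F_y, \qquad g''(0) = 2\langle w,w\rangle^F_y - 2\langle u, R^F_y u\rangle^F_y.
\end{equation*}
Abbreviating $|v| := (\langle v,v\rangle^F_y)^{1/2}$ and applying the chain rule to $f = \sqrt{g}$, one computes
\begin{equation*}
-f''(0) = \frac{\langle u, R^F_y u\rangle^F_y}{|u|} - \frac{|w|^2}{|u|} + \frac{(\langle u,w\rangle^F_y)^2}{|u|^3}.
\end{equation*}

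To optimize over $w \in y^\perp$, decompose $w = (\langle u,w\rangle^F_y/|u|^2)\,u + w'$ with $w' \in y^\perp \cap u^\perp$. The last two terms collapse to $-|w'|^2/|u|$, so $-f''(0) \le \langle u, R^F_y u\rangle^F_y/|u|$, with equality attained whenever $w' = 0$ (for instance by taking $J'(0) = 0$). Multiplying this maximum by $(\langle u,u\rangle^F_y)^{-1/2} = 1/|u|$ produces $\langle u, R^F_y u\rangle^F_y / |u|^2$, which agrees with the definition of $K^F(x,y,\mathbf{P})$: the unit speed condition forces $\langle y,y\rangle^F_y = 1$, and the hypothesis on $\mathbf{P}$ gives $\langle y,u\rangle^F_y = 0$, so the denominator in the flag curvature formula reduces to $\langle u,u\rangle^F_y$.

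The main obstacle is purely bookkeeping. The reference vector $\dot\gamma(t)$ in the inner product $\langle\cdot,\cdot\rangle^F_{\dot\gamma(t)}$ varies with $t$, so Lemma \ref{lemma-covariant-derivative} must be invoked carefully at every differentiation; the key vanishing $\langle R^F_y v, y\rangle^F_y = 0$ used in the preliminary reduction rests on the symmetry $\langle R^F_y u, v\rangle^F_y = \langle u, R^F_y v\rangle^F_y$ together with $R^F_y y = 0$; and one must verify that $w = D^F_{\dot\gamma(0)} J(0)$ really sweeps out all of $y^\perp$, which is what turns the supremum in (\ref{0010}) into an attained maximum.
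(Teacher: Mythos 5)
Your proposal is correct and follows essentially the same route as the paper: both compute $f''(0)$ via Lemma \ref{lemma-covariant-derivative} and the Jacobi equation, bound it from below (your completion of the square in $w=D^F_{\dot{\gamma}(0)}J(0)$ is exactly the Cauchy inequality the paper invokes), and exhibit a maximizer with $D^F_{\dot{\gamma}(0)}J(0)$ proportional to $u$. Your version is slightly more explicit about parametrizing $\mathcal{J}^F_{\gamma;u}$ by $w$ in the $g^F_y$-orthogonal complement of $y$ and about the full equality set, but the argument is the same.
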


\begin{proof}
Let $J(t)$ be any orthogonal Jacobi field along the unit speed geodesic
$\gamma(t)$ for the metric $F$, satisfying $J(0)=u$.
Using Lemma \ref{lemma-covariant-derivative},
we can get
\begin{eqnarray*}
& &\frac{d^2}{dt^2}[(\langle J(t),J(t)\rangle_{\dot{\gamma}(t)}^F)^{1/2}]
=\frac{d}{dt}\left(\frac{\langle D^F_{\dot{\gamma}(t)}J(t),J(t)\rangle_{\dot{\gamma}(t)}^F}{
(\langle J(t),J(t)\rangle_{\dot{\gamma}(t)}^F)^{1/2}}
\right)\\
&=&\frac{
\langle D^F_{\dot{\gamma}(t)}J(t), D^F_{\dot{\gamma}(t)}J(t)\rangle_{\dot{\gamma}(t)}^F
\langle J(t),J(t)\rangle_{\dot{\gamma}(t)}^F-
(\langle D^F_{\dot{\gamma}(t)}J(t),J(t)\rangle_{\dot{\gamma}(t)}^F)^2}{
(\langle J(t),J(t)\rangle_{\dot{\gamma}(t)}^F)^{3/2}}\\
& &+\frac{\langle J(t),D^F_{\dot{\gamma}(t)}D^F_{\dot{\gamma}(t)}J(t)
\rangle_{\dot{\gamma}(t)}^F}{
(\langle J(t),J(t)\rangle_{\dot{\gamma}(t)}^F)^{1/2}}\\
&\geq&\frac{\langle J(t),D^F_{\dot{\gamma}(t)} D^F_{\dot{\gamma}(t)}J(t)\rangle_{\dot{\gamma}(t)}^F}{
(\langle J(t),J(t)\rangle_{\dot{\gamma}(t)}^F)^{1/2}}
=-\frac{\langle J(t),R^F_{\dot{\gamma}(t)}J(t)\rangle_{\dot{\gamma}(t)}^F}{
(\langle J(t),J(t)\rangle_{\dot{\gamma}(t)}^F)^{1/2}},
\end{eqnarray*}
in which we have used Cauchy inequality.
So at $t=0$, we have
\begin{eqnarray}
K^F(x,y,\mathbf{P})&=&\frac{\langle R^F_{\dot{\gamma}(0)}J(0),J(0)\rangle_{\dot{\gamma}(0)}^F}{
\langle J(0),J(0)\rangle_{\dot{\gamma}(0)}^F}\nonumber\\
&\geq&-(\langle u,u\rangle_{y}^F)^{-1/2}
\frac{d^2}{dt^2}[(\langle J(t),J(t)
\rangle_{\dot{\gamma}(t)}^F)^{1/2}]|_{t=0}
\label{0004}.
\end{eqnarray}

This calculation proves
\begin{equation}\label{0012}
K^F(x,y,\mathbf{P})\geq(\langle v,v\rangle_y^F)^{-1/2}
\max_{J(t)\in\mathcal{J}^F_{\gamma;u}} \{-\frac{d^2}{dt^2}(\langle J(t),J(t)
\rangle_{\dot{\gamma}(t)}^F|_{t=0})^{1/2}\}.
\end{equation}

Notice that there exists a unique Jacobi field $J(t)$ along $c(t)$ such that
$J(0)=u$ and $D^F_{\dot{\gamma}(t)}J(t)|_{t=0}=0$. This Jacobi field is orthogonal, i.e., $J(t)\in\mathcal{J}^F_{\gamma;u}$, because
$\langle J(t),\dot{\gamma}(t)\rangle^F_{\dot{\gamma}(t)}$ is a linear function of $t$, and when $t=0$, we have
$\langle J(0),\dot{\gamma}(0)\rangle^F_{\dot{\gamma}(0)}
=\langle u,y\rangle^F_{y}=0$
and by Lemma \ref{lemma-covariant-derivative},
\begin{eqnarray*}
\frac{d}{dt}|_{t=0}\langle J(t),\dot{\gamma}(t)\rangle^F_{\dot{\gamma}(t)}
=
\langle D^F_{\dot{\gamma}(t)}J(t)|_{t=0},\dot{\gamma}(0)
\rangle_{y}^F+
\langle J(0),D^F_{\dot{\gamma}(t)}\dot{\gamma}(t)|_{t=0}
\rangle_{y}^F=0.
\end{eqnarray*}

From previous calculation, it is easy to see that the equality and maximum in (\ref{0012}) is achieved simultaneously by this $J(t)$. This ends the proof of Lemma \ref{lemma-flag-curv-Jacobi}.
\end{proof}
\medskip

The most crucial calculation for a homothetic navigation is
contained in the following lemma.

\begin{lemma}\label{lemma-key-calculation}
Let $v$ be a tangent vector at ${\gamma}(\frac{e^{2ct}-1}{2c})$
 in the $g^F_{\dot{\gamma}(\frac{e^{2ct}-1}{2c})}$-orthogonal complement of
 $\dot{\gamma}(\frac{e^{2ct}-1}{2c})$. Then
 $(\Psi_t)_*(v)$  is a tangent vector at $\Psi_t({\gamma}(\frac{e^{2ct}-1}{2c}))$ in
 the $g^{\tilde{F}}_{\dot{\tilde{\gamma}}(t)}$-orthogonal complement
 of $\dot{\tilde{\gamma}}(t)$, which satisfies
 \begin{equation}\label{0100}
 \langle(\Psi_t)_*(v),(\Psi_t)_*(v)
 \rangle_{\dot{\tilde{\gamma}}(t)}^{\tilde{F}}
 = \frac{1}{c_0+1}\cdot e^{-2ct}\langle v,v
 \rangle^F_{\dot{\gamma}(\frac{e^{2ct}-1}{2c})},
 \end{equation}
 in which $c_0$ is the constant in Lemma \ref{lemma-homo-Jacobi}.
\end{lemma}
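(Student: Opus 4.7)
Let me abbreviate $s=\frac{e^{2ct}-1}{2c}$ and set $w=e^{2ct}(\Psi_t)_*\dot{\gamma}(s)$. The calculation in the proof of Theorem \ref{thm-1} shows that $w$ is an $F$-unit vector at $\tilde{\gamma}(t)=\Psi_t(\gamma(s))$ and that $\dot{\tilde{\gamma}}(t)=w+V(\tilde{\gamma}(t))$. Thus $w$ plays the role of ``$y$'' and $\dot{\tilde{\gamma}}(t)$ plays the role of ``$\tilde{y}$'' in Lemma \ref{lemma-navigation-fundamental-tensor}, and the plan is to first reduce every $\tilde{F}$-quantity to an $F$-quantity at $w$, and then use (\ref{0011}) to transport it back along $\Psi_{-t}$ to $\gamma(s)$.

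First I would verify the orthogonality claim. Since $g^F_y$ is $0$-homogeneous in $y$, we have $g^F_w=g^F_{(\Psi_t)_*\dot{\gamma}(s)}$, so
$$\langle(\Psi_t)_*v,w\rangle^F_w=e^{2ct}\langle(\Psi_t)_*v,(\Psi_t)_*\dot{\gamma}(s)\rangle^F_{(\Psi_t)_*\dot{\gamma}(s)}=e^{-2ct}\langle v,\dot{\gamma}(s)\rangle^F_{\dot{\gamma}(s)}=0$$
by (\ref{0011}) and the hypothesis on $v$. Consequently $(\Psi_t)_*v$ lies in the $g^F_w$-orthogonal complement of $w$, so Lemma \ref{lemma-navigation-fundamental-tensor} applies and gives
$$\langle(\Psi_t)_*v,(\Psi_t)_*v\rangle^{\tilde{F}}_{\dot{\tilde{\gamma}}(t)}=\frac{1}{1+\langle V(\tilde{\gamma}(t)),w\rangle^F_w}\langle(\Psi_t)_*v,(\Psi_t)_*v\rangle^F_w.$$
The same computation as for orthogonality, with $v$ in both slots, shows that the second factor equals $e^{-4ct}\langle v,v\rangle^F_{\dot{\gamma}(s)}$. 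Also, since $\dot{\tilde{\gamma}}(t)$ is $\tilde{F}$-unit, it is in particular tangent to the $g^{\tilde{F}}$-orthogonal complement of $\dot{\tilde{\gamma}}(t)$, so the orthogonality of $(\Psi_t)_*v$ with $\dot{\tilde{\gamma}}(t)$ follows at once from its orthogonality with $w$ and the fact that $\dot{\tilde{\gamma}}(t)-w=V(\tilde{\gamma}(t))$ contributes only to the ``$y$-direction'' of the fundamental tensor (alternatively, one may invoke the parallel-shifting interpretation of the indicatrices as in the proof of Theorem \ref{thm-2}).

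The heart of the argument is to compute the scalar $1+\langle V(\tilde{\gamma}(t)),w\rangle^F_w$. Using the standard flow identity $(\Psi_t)_*V(\gamma(s))=V(\Psi_t(\gamma(s)))=V(\tilde{\gamma}(t))$, the $0$-homogeneity of $g^F$ in its base vector, and (\ref{0011}), I get
$$\langle V(\tilde{\gamma}(t)),w\rangle^F_w=e^{2ct}\langle(\Psi_t)_*V(\gamma(s)),(\Psi_t)_*\dot{\gamma}(s)\rangle^F_{(\Psi_t)_*\dot{\gamma}(s)}=e^{-2ct}\langle V(\gamma(s)),\dot{\gamma}(s)\rangle^F_{\dot{\gamma}(s)}.$$
By Lemma \ref{lemma-homo-Jacobi} the last inner product equals $c_0-2cs=c_0+1-e^{2ct}$, so
$$1+\langle V(\tilde{\gamma}(t)),w\rangle^F_w=1+e^{-2ct}(c_0+1-e^{2ct})=e^{-2ct}(c_0+1).$$
Combining the two pieces yields (\ref{0100}).

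The main obstacle is bookkeeping the three homothetic factors (the geodesic reparametrization $s=\frac{e^{2ct}-1}{2c}$, the $e^{-2ct}$ appearing in $\Psi_t^*F$, and the extra $e^{2ct}$ needed to convert $(\Psi_t)_*\dot{\gamma}(s)$ into the $F$-unit tangent $w$) without losing the symmetry; the whole point is that the factor $e^{2ct}$ from the last step is precisely what turns $c_0-2cs$ into $(c_0+1)e^{-2ct}-1$, so that adding $1$ telescopes the expression down to a clean factor $(c_0+1)e^{-2ct}$. Once this cancellation is spotted, every other step is a one-line application of (\ref{0011}), Lemma \ref{lemma-navigation-fundamental-tensor}, and Lemma \ref{lemma-homo-Jacobi}.
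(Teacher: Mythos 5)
Your proof is correct and follows essentially the same route as the paper's: reduce to the $F$-unit vector $w$ (the paper's $\bar{y}(t)$), apply Lemma \ref{lemma-navigation-fundamental-tensor}, transport both the norm of $(\Psi_t)_*v$ and the scalar $\langle V,w\rangle^F_w$ back via (\ref{0011}), and finish with Lemma \ref{lemma-homo-Jacobi}, obtaining the same telescoping $1+e^{-2ct}(c_0+1-e^{2ct})=e^{-2ct}(c_0+1)$. The only blemish is the sentence claiming $\dot{\tilde{\gamma}}(t)$ is ``tangent to the $g^{\tilde{F}}$-orthogonal complement of $\dot{\tilde{\gamma}}(t)$,'' which is garbled; the parenthetical appeal to the parallel-shifted indicatrices is the correct justification for the orthogonality claim and should replace it.
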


\begin{proof}
Firstly, the argument in the proof of Theorem \ref{thm-1} covers the first statement of Lemma \ref{lemma-key-calculation},
i.e., $\langle (\Psi_t)_*(v),\dot{\tilde{\gamma}}(t)
\rangle^{\tilde{F}}_{\dot{\tilde{\gamma}}(t)}=0$. So we only need to verify (\ref{0100}).

Denote $y(t)=\frac{d}{dt}\gamma(\frac{e^{2ct}-1}{2c})$ and
$\bar{y}(t)=\dot{\tilde{\gamma}}(t)-V(\tilde{\gamma}(t))
=(\Psi_t)_*(y(t)).$
By Lemma \ref{lemma-navigation-fundamental-tensor},
\begin{eqnarray}\label{0030}
\langle (\Psi_t)_*(v),(\Psi_t)_*(v)\rangle^{\tilde{F}}_{\bar{y}(t)}=
\frac{1}{1+\langle
V(\tilde{\gamma}(t)),
\bar{y}(t)
\rangle^F_{\bar{y}(t)}}
\langle (\Psi_t)_*(v),(\Psi_t)_*(v)\rangle^F_{
\bar{y}(t)}.
\end{eqnarray}
By (\ref{0011}), we get
\begin{eqnarray}\label{0031}
\langle (\Psi_t)_*(v),(\Psi_t)_*(v)\rangle^F_{
\bar{y}(t)}=
\langle (\Psi_t)_*(v),(\Psi_t)_*(v)\rangle^F_{
(\Psi_t)_*({y}(t))}
= e^{-4ct}
\langle v,v
\rangle^F_{y(t)},
\end{eqnarray}
and
\begin{eqnarray}\label{0032}
\langle V(\tilde{\gamma}(t)),\bar{y}(t)\rangle^F_{\bar{y}(t)}&=&
\langle (\Psi_t)_* (V({\gamma}(\frac{e^{2ct}-1}{2c}))),(\Psi_t)_*(y(t))
\rangle^F_{(\Psi_t)_*(y(t))}
\nonumber\\
&=& e^{-4ct}\langle V(\frac{e^{2ct}-1}{2c}),
y(t)\rangle^F_{
\dot{\gamma}(\frac{e^{2ct}-1}{2c})}\nonumber\\
&=& e^{-2ct}\langle V(\frac{e^{2ct}-1}{2c}),
\dot{\gamma}(\frac{e^{2ct}-1}{2c})\rangle^F_{
\dot{\gamma}(\frac{e^{2ct}-1}{2c})}.
\end{eqnarray}
By Lemma \ref{lemma-homo-Jacobi},
\begin{eqnarray}\label{0033}
\langle V(\frac{e^{2ct}-1}{2c}),
\dot{\gamma}(\frac{e^{2ct}-1}{2c})\rangle^F_{
\dot{\gamma}(\frac{e^{2ct}-1}{2c})}=c_0-2c
\cdot\frac{e^{2ct}-1}{2c}=
(c_0+1)-e^{2ct},
\end{eqnarray}
in which $c_0$ is the constant in Lemma \ref{lemma-homo-Jacobi}.

Summarizing (\ref{0030})-(\ref{0033}), we get
\begin{eqnarray*}
\langle (\Psi_t)_*v,(\Psi_t)_*v\rangle^{\tilde{F}}_{\dot{\tilde{\gamma}}(t)}
&=&\frac{1}{1+e^{-2ct}((c_0+1)-e^{2ct})}\cdot
e^{-4ct}\langle v,v\rangle^F_{\dot{\gamma}(\frac{e^{2ct}-1}{2c})}
\\
&=&\frac1{c_0+1}\cdot e^{-2ct} \langle
v,v\rangle^F_{\dot{\gamma}(\frac{e^{2ct}-1}{2c})}.
\end{eqnarray*}

This ends the proof of Lemma \ref{lemma-key-calculation}.
\end{proof}
\medskip

Now we summarize all the observations in these two sections
to prove Theorem \ref{main-thm-1}.

\begin{proof}[Proof of Theorem \ref{main-thm-1}]
Firstly,
we assume the dilation $c$ is not zero. For any fixed $x\in M$, we only need to restrict our
discussion locally in a suitable open neighborhood of $x$.
Let $y$ be any $F$-unit tangent vector in $T_xM$, then there exists
a unique unit speed $\gamma(t)$ for the metric $F$, such that
$\gamma(0)=x$ and $\dot{\gamma}(0)=y$. By Theorem \ref{thm-1},
$\tilde{\gamma}(t)=\Psi_t(\gamma(\frac{e^{2ct}-1}{2c}))$ is a unit speed geodesic
for the metric $\tilde{F}$, satisfying $\tilde{\gamma}(0)=x$
and $\dot{\tilde{\gamma}}(0)=\tilde{y}=y+V(x)$.

For any orthogonal Jacobi field $J(t)$
in $\mathcal{J}^F_{\gamma;u}$, we denote
$f_J(t)=(\langle J(t),J(t)\rangle_{\dot{\gamma}(t)}^F)^{1/2}$.
By Lemma \ref{lemma-flag-curv-Jacobi}, we have
\begin{equation}\label{0101}
K^F(x,y,\mathbf{P})=(\langle u,u\rangle_y^F)^{-1/2}\max_{J(t)\in \mathcal{J}^F_{\gamma;u}}\{-{f_J}''(0)\}.
\end{equation}

By Theorem \ref{thm-2}, $\tilde{J}(t)=(\Psi_t)_*(J(\frac{e^{2ct}-1}{2c}))
\in\mathcal{J}^{\tilde{F}}_{\tilde{\gamma};u}$, i.e., it is an orthogonal
Jacobi field along $\tilde{\gamma}(t)$ for the metric $\tilde{F}$
satisfying $\tilde{J}(0)=u$.
We denote $\tilde{f}_J(t)=\langle \tilde{J}(t),\tilde{J}(t)\rangle_{\dot{\gamma}(t)}^{\tilde{F}}$.
Then
by Corollary \ref{coro-thm-2} and Lemma \ref{lemma-flag-curv-Jacobi},
\begin{equation}\label{0102}
K^{\tilde{F}}(x,\tilde{y},\tilde{\mathbf{P}})
=(\langle u,u\rangle_{\tilde{y}}^{\tilde{F}})^{-1/2}
\max_{{J}(t)\in\mathcal{J}^{{F}}_{{\gamma};u}}
\{-\tilde{f}_J{}''(0)\}.
\end{equation}

By Lemma \ref{lemma-key-calculation}, for the same
$J(t)\in \mathcal{J}^F_{\gamma;u}$, we have
\begin{equation}\label{0400}
\tilde{f}_J(t)\equiv\sqrt{\frac{1}{1+c_0}}
e^{-ct}f_J(\frac{e^{2ct}-1}{2c}),
\end{equation}
where $c_0$ is the constant in Lemma \ref{lemma-homo-Jacobi}.
Evaluate (\ref{0400}) at $t=0$, we can determine the constant
$$\sqrt{\frac{1}{1+c_0}}
=\frac{\tilde{f}_J(0)}{f_J(0)}=\frac{(\langle u,u\rangle_{\tilde{y}}^{\tilde{F}})^{1/2}}{(\langle u,u\rangle_y^F)^{1/2}}.$$

It is easy to calculate that
\begin{eqnarray}
{\tilde{f}_J}''(0)&=& \frac{(\langle u,u\rangle_{\tilde{y}}^{\tilde{F}})^{1/2}}{(\langle u,u\rangle_y^F)^{1/2}}\cdot \frac{d^2}{dt^2}(e^{-ct}f_J(\frac{e^{2ct}-1}{2c}))|_{t=0}\nonumber\\
&=& \frac{(\langle u,u\rangle_{\tilde{y}}^{\tilde{F}})^{1/2}}{(\langle u,u\rangle_y^F)^{1/2}}\cdot({c^2} f_J(0)+{f_J}''(0)).\label{0103}
\end{eqnarray}
Finally, summarizing (\ref{0101}), (\ref{0102}) and (\ref{0103}), we get
\begin{eqnarray*}
K^{\tilde{F}}(x,\tilde{y},\tilde{\mathbf{P}})&=&
(\langle u,u\rangle_{\tilde{y}}^{\tilde{F}})^{-1/2}
\max_{J(t)\in\mathcal{J}^F_{\gamma;u}}
\{-\tilde{f}_J{}''(0)\}\\
&=&(\langle u,u\rangle_{{y}}^{{F}})^{-1/2}
\max_{J(t)\in\mathcal{J}^F_{\gamma;u}}
\{-({c^2} f_J(0)+{f_J}''(0))\}\\
&=&(\langle u,u\rangle_y^F)^{-1/2}\max_{J(t)\in\mathcal{J}^F_{\gamma;u}}
\{-f_J{}''(0)\}-{c^2}\\
&=& K^F(x,y,\mathbf{P})-{c^2}.
\end{eqnarray*}

This ends the proof of Theorem \ref{main-thm-1} when $c\neq0$.
With only some minor modifications, this argument can also prove
the case $c=0$, which ends the proof of Theorem \ref{main-thm-1}. \end{proof}

\begin{remark} In \cite{FM2017}, Theorem \ref{main-thm-1}
with $c=0$ is proved by a
slightly different approach.
\end{remark}

The authors of \cite{FM2017} used another description for the flag curvature, i.e.,
\begin{equation}\label{another-description-flag-curvature}
K^F(x,y,\mathbf{P})=\frac{1}{2\langle u,u\rangle_y^F}
\max_{J(t)\in \mathcal{J}^F_{\gamma;u}}
\{-\frac{d^2}{dt^2}|_{t=0}({f_J}^2(t))\},
\end{equation}
to prove Theorem \ref{main-thm-1} for Killing navigation.
The maximum in (\ref{another-description-flag-curvature})
can only be achieved when $D^F_{\dot{\gamma}(t)}J(t)|_{t=0}=0$.
Implied by Lemma \ref{lemma-key-calculation}, $D^{\tilde{F}}_{\dot{\tilde{\gamma}}(t)}\tilde{J}(t)|_{t=0}\neq0$ when $c\neq0$.
So their proof can not be directly generalized to
 homothetic navigation.

\begin{remark} \label{remark-2}
Our proof of Theorem \ref{main-thm-1} can be easily generalized to pseudo-Finsler geometry.
\end{remark}

In \cite{JV2018}, M. Javaloyes and H. Vit\'{o}rio proved Theorem \ref{main-thm-1} when $F$ is pseudo-Finsler. Their proof applied the fanning curves approach \cite{Ah1989}.

Our method can also be applied to prove their theorem (i.e., Theorem 1.3 in \cite{JV2018}). The geodesic correspondence is similar to the Finsler case (see Theorem 1.2 in \cite{JV2018}). From
the view point of variation, the expected correspondence between orthogonal Jacobi fields follows immediately.
Lemma \ref{lemma-key-calculation} with the key calculation can be
proved by the same argument.

When we use orthogonal Jacobi fields
to describe flag curvature, we can not use Lemma \ref{lemma-flag-curv-Jacobi} directly. The reason is the following.
When $F$ is pseudo-Finsler, the fundamental tensor $\langle\cdot,\cdot\rangle_y^F$ may be indefinite, and then Cauchy inequality used in the proof of Lemma \ref{lemma-flag-curv-Jacobi} fails.
However, since the restriction of $\langle\cdot,\cdot\rangle_y^F$ to $\mathbf{P}=\mathrm{span}\{y,u\}$ is nondegenerate and $\langle u,y\rangle_y^F=0$, we must have $\langle u,u\rangle^F_y\neq0$. By similar calculation as in the proof of Lemma \ref{lemma-flag-curv-Jacobi}, we can show that $K^F(x,y,\mathbf{P})$ is the unique critical value
of the functional
$$L(J)=-|\langle u,u\rangle_y^F|^{1/2}
\frac{d^2}{dt^2}|_{t=0}(|\langle J(t),J(t)\rangle_{
\dot{\gamma}(t)}^F|^{1/2}),\quad\forall J(t)\in\mathcal{J}^F_{\gamma;u},$$ and
the critical set is the affine subspace of all
$J(t)\in\mathcal{J}^F_{\gamma;u}$ such that
$D^F_{\dot{\gamma}(t)}J(t)|_{t=0}$ is a scalar multiple of $u$. So
we can still use the calculation (\ref{0103}) to prove the flag curvature equality (\ref{0040}).
\medskip

The geodesic correspondence for homothetic navigation and the key calculation in Lemma \ref{lemma-key-calculation} also help us prove Theorem \ref{main-thm-6}.

\begin{proof}[Proof of Theorem \ref{main-thm-6}]
Let $\gamma(t)$ be the unit speed geodesic om $(M,F)$, satisfying $\gamma(0)=x$ and $\dot{\gamma}(0)=y$, and denote $\tilde{\gamma}(t)=\Psi_t(\gamma(\frac{e^{2ct}-1}{2c}))$. Firstly,
we choose smooth vector fields $e_i(t)$, $1\leq i\leq n$, along the geodesic $\gamma(t)$, such that $e_1(t)=\dot{\gamma}(t)$, and
they provide a $g^F_{\dot{\gamma}}$-orthonormal basis of $T_{\gamma(t)}M$ for each  $t$.
Secondly, we define the following smooth vector fields along $\tilde{\gamma}(t)$,
\begin{eqnarray*}
\bar{e}_i(t)&=&(\Psi_t)_*(e_i(\frac{e^{2ct}-1}{2c})), \mbox{ for } 1\leq i\leq n,\\
\tilde{e}_1(t)&=&\dot{\tilde{\gamma}}(t), \mbox{ and}\\
\tilde{e}_i(t)&=&\bar{e}_i(t)
\mbox{ for } 1<i\leq n.
\end{eqnarray*}
Then at each point $\tilde{\gamma}(t)$, $\{\bar{e}_i(t)\mbox{ with } 1\leq i\leq n\}$ and $\{\tilde{e}_i(t)\mbox{ with }1\leq i\leq n\}$ are two bases for $T_{\tilde{\gamma}(t)}M$.
Denote
\begin{eqnarray*}
\mathrm{vol}(t)&=&\mathrm{Vol}(\{(y^i)|F(\gamma(t),y^i e_i(t))\leq1\}),\\
\overline{\mathrm{vol}}(t)&=&\mathrm{Vol}
(\{(y^i)|F(\tilde{\gamma}(t),
y^i\bar{e}_i(t))\leq1\}),\mbox{ and}\\
\widetilde{\mathrm{vol}}(t)&=&\mathrm{Vol}(\{
(y^i)|F(\tilde{\gamma}(t),
y^i\tilde{e}_i(t))\leq1\}),
\end{eqnarray*}
in which $\mathrm{Vol}$ is the standard measure in
an Euclidean space.

Using this setup, the distortions $\tau^F(\gamma(t),\dot{\gamma}(t))$ and
$\tau^{\tilde{F}}(\tilde{\gamma}(t),\dot{\tilde{\gamma}}(t))$, for the metric $F$ and $\tilde{F}$ respectively, can be
presented as
\begin{eqnarray}
\tau^F(\gamma(t),\dot{\gamma}(t))&=&
\ln\sqrt{\det(\langle e_i(t),e_j(t)\rangle^F_{\dot{\gamma}(t)})}-\ln\mathrm{vol}(t)+C_0,
\mbox{ and}\label{tau-1}\\
\tau^{\tilde{F}}(\tilde{\gamma}(t),\dot{\tilde{\gamma}}(t))
&=&\ln\sqrt{\det(\langle\tilde{e}_i(t),\tilde{e}_j(t)
\rangle^{\tilde{F}}_{\dot{\tilde{\gamma}}(t)})}+
\ln\widetilde{\mathrm{vol}}(t)+C_0,
\label{tau-2}
\end{eqnarray}
in which $C_0$ is some universal constant depending on $n$.

By Lemma \ref{lemma-key-calculation},
\begin{equation}\label{0060}
\det(\langle\tilde{e}_i(t),\tilde{e}_j(t)
\rangle_{\dot{\tilde{\gamma}}(t)})=
C\cdot e^{-2c(n-1)t}
\det(\langle e_i(\frac{e^{2ct}-1}{2c}),e_j(\frac{e^{2ct}-1}{2c})
\rangle^F_{\dot{\gamma}(t)}),
\end{equation}
in which $C={1}/({1+c_0})^{n-1}$ is some positive constant.
By (\ref{0032}), (\ref{0033}), the homothetic property of $V$ and multi-variable calculus,
\begin{eqnarray}\label{0061}
\widetilde{\mathrm{vol}}(t)&=&\frac{
\overline{\mathrm{vol}}(t)
}{1+\langle
V(\Psi_t(\gamma(\frac{e^{2ct}-1}{2c}))),\bar{e}_1(t)
\rangle^F_{\bar{e}_1(t)}}
=C' e^{2ct}\overline{\mathrm{vol}}(t)\nonumber\\
&=&C' e^{2ct}\cdot e^{2c(n-1)t}
\mathrm{vol}(\frac{e^{2ct}-1}{2c})= C' e^{2cnt}\mathrm{vol}(\frac{e^{2ct}-1}{2c})
,
\end{eqnarray}
in which $C'$ is some positive constant.

Summarizing (\ref{tau-1})-(\ref{0061}), we get
$$\tau^{\tilde{F}}(\tilde{\gamma}(t),\dot{\tilde{\gamma}}(t))
=\tau^F(\gamma(\frac{e^{2ct}-1}{2c}),\dot{\gamma}
(\frac{e^{2ct}-1}{2c}))
+c(n+1)t,
$$
so
\begin{eqnarray*}
S^{\tilde{F}}(x,\tilde{y})&=&\frac{d}{dt}
\tau^{\tilde{F}}(\tilde{\gamma}(t),\dot{\tilde{\gamma}}(t))|_{t=0}\\
&=&
(\frac{e^{2ct}-1}{2c})'|_{t=0}\cdot
\frac{d}{ds}\tau^{F}(\gamma(s),\dot{\gamma}(s))|_{s=0}+(n+1)c\\
&=&
S^{F}(x,y)+(n+1)c.
\end{eqnarray*}

This ends the proof of Theorem \ref{main-thm-6} when $c\neq0$.
The case $c=0$ can be proved similarly.
\end{proof}

\section{Application to the study of locally symmetric property}

In this section, we discuss an application of Theorem \ref{main-thm-1}.

Recall that a Finsler metric $F$ is called {\it locally symmetric} ({\it in curvature sense}) if for any unit speed geodesic
$\gamma(t)$, we have $D^F_{\dot{\gamma}(t)}R^F_{\dot{\gamma}(t)}\equiv0$
\cite{Fo1997}.

This is a weaker version for the notion of locally symmetric Finsler metric, compared to the one defined in metric sense, i.e.,
around each point $x$, we can find a local involutive isometry $\rho_x$, such that $x$ is an isolated fixed point of $\rho_x$.
Notice that locally symmetric Finsler metric in metric sense must be Berwaldian \cite{MT2012}, but there are many non-Berwaldian Finsler spheres with constant curvature, which are locally symmetric.

In \cite{FM2017}, the second author and P. Foulon proved that
the locally symmetric property is preserved by Killing navigation.
However, for non-Killing homothetic navigation, the following theorem indicates a very different phenomenon.

\begin{theorem}\label{thm-application}
Let $\tilde{F}$ be the
metric defined by navigation from the datum $(F,V)$, in which $V$ is a homothetic vector field on $M$ satisfying $F(x,-V(x))<1$
in an open subset $\mathcal{U}$ and its dilation $c$ is nonzero. Then the following two
statements are equivalent:
\begin{enumerate}
\item The metric $F$ has constant zero flag curvature in $\mathcal{U}$.
\item Both $F$ and $\tilde{F}$ are locally symmetric in $\mathcal{U}$.
\end{enumerate}
\end{theorem}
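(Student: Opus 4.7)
The plan is to dispatch (1)$\Rightarrow$(2) as an immediate consequence of Theorem~\ref{main-thm-1}, and to prove (2)$\Rightarrow$(1) by computing the spectrum of $R^F$ at a generic point along $\tilde\gamma$ in two different ways and equating them. For (1)$\Rightarrow$(2) I would observe that $K^F\equiv 0$ in $\mathcal{U}$ forces $R^F_y\equiv 0$ (since $R^F_y y=0$ always and its spectrum on the $g^F_y$-orthogonal complement of $y$ is read off from the flag curvatures), so $D^F R^F=0$ holds trivially. Theorem~\ref{main-thm-1} then gives $K^{\tilde F}\equiv-c^2$, whence $R^{\tilde F}_{\tilde y}u=-c^2(u-\langle\tilde y,u\rangle^{\tilde F}_{\tilde y}\tilde y)$ on $\tilde F$-unit vectors; the locally symmetric condition $D^{\tilde F}_{\dot{\tilde\gamma}}R^{\tilde F}_{\dot{\tilde\gamma}}=0$ follows by direct expansion using Lemma~\ref{lemma-covariant-derivative} and $D^{\tilde F}_{\dot{\tilde\gamma}}\dot{\tilde\gamma}=0$.

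\textbf{The main direction.} For (2)$\Rightarrow$(1), I would fix $x\in\mathcal{U}$ and an $F$-unit $y\in T_xM$, take $\gamma(s)$ the unit $F$-geodesic with $\dot\gamma(0)=y$ and $\tilde\gamma(t)=\Psi_t(\gamma(\frac{e^{2ct}-1}{2c}))$ the corresponding unit $\tilde F$-geodesic (Theorem~\ref{thm-1}), and let $\{\lambda_i\}_{i=1}^{n-1}$ denote the eigenvalues of $R^F_y|_x$ on the $g^F_y$-orthogonal complement of $y$. A preparatory step is to combine Lemma~\ref{lemma-navigation-fundamental-tensor} with the identity~(\ref{0040}) and polarize the Rayleigh identity $K=\langle R_y u,u\rangle/\langle u,u\rangle$ to upgrade~(\ref{0040}) to the pointwise operator equality
\[
R^{\tilde F}_{\tilde y}|_x=R^F_y|_x-c^2\,\mathrm{id}
\]
on the common orthogonal complement (the two orthogonal complements agree as subspaces of $T_xM$ by Lemma~\ref{lemma-key-calculation} at $t=0$). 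The spectrum of $R^{\tilde F}_{\tilde y}|_x$ is therefore $\{\lambda_i-c^2\}$, and the two local-symmetry hypotheses propagate these spectra along the respective geodesics: $\mathrm{spec}(R^F_{\dot\gamma(s)})\equiv\{\lambda_i\}$ in $s$ and $\mathrm{spec}(R^{\tilde F}_{\dot{\tilde\gamma}(t)})\equiv\{\lambda_i-c^2\}$ in $t$.

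\textbf{The two-way count.} The heart of the argument is to compute $\mathrm{spec}(R^F_{y_t}|_{\tilde\gamma(t)})$ at arbitrary small $t$ in two ways, where $y_t:=\dot{\tilde\gamma}(t)-V(\tilde\gamma(t))=e^{2ct}(\Psi_t)_*\dot\gamma(\frac{e^{2ct}-1}{2c})$ is the $F$-unit vector appearing in the navigation decomposition of $\dot{\tilde\gamma}(t)$. Applying the operator identity of the previous paragraph at the point $\tilde\gamma(t)$ and invoking $\tilde F$-local symmetry returns $\{\lambda_i\}$. For the second count I would use that $\Psi_t^*F=e^{-2ct}F$ is only a constant rescaling, hence shares the spray coefficients $G^i$ and the Riemann tensor with $F$, so that the naturality
\[
R^F_{(\Psi_t)_*v}|_{\Psi_t(x)}=(\Psi_t)_*\circ R^F_v|_x\circ(\Psi_t)_*^{-1}
\]
holds and $\Psi_t$-pushforward preserves the spectrum of $R^F$. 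Combined with the 2-homogeneity $R^F_{\mu v}=\mu^2R^F_v$ and $F$-local symmetry along $\gamma$, the alternative count becomes $\{e^{4ct}\lambda_i\}$. Matching multisets and inspecting $\sum_i\lambda_i^2=e^{8ct}\sum_i\lambda_i^2$ for all small $t$ forces every $\lambda_i$ to vanish, since $c\neq 0$. As $(x,y)$ were arbitrary, $K^F\equiv 0$ on $\mathcal{U}$.

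\textbf{Main obstacle.} The one step that deserves a conceptual justification rather than pure bookkeeping is the $\Psi_t$-naturality of $R^F$: it rests on the (standard) fact that the Finslerian geodesic spray, and therefore the Riemann tensor, is unchanged by a constant rescaling of the metric ($R^{\alpha F}=R^F$ for $\alpha>0$). Once this is in hand, the rest is polarization of the flag curvature formula together with the navigation identities from Section~2 and the exponential reparametrization already used in Section~\ref{sec-4}.
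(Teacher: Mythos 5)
Your proposal is correct and follows essentially the same route as the paper: both directions hinge on Theorem \ref{main-thm-1}, and for (2)$\Rightarrow$(1) the paper runs exactly your ``two-way count'' — local symmetry of $F$ plus the homothety gives the factor $e^{4ct}$, Theorem \ref{main-thm-1} shifts by $-c^2$, and local symmetry of $\tilde F$ forces the result to be constant, killing the curvature since $c\neq 0$. The only (cosmetic) difference is that you upgrade the flag-curvature identity to the operator identity $R^{\tilde F}_{\tilde y}=R^F_y-c^2\,\mathrm{id}$ by polarization and track the full spectrum, whereas the paper avoids polarization by tracking only $K^F_{\max}$ and $K^F_{\min}$ (the extreme Rayleigh quotients), which suffices.
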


\begin{proof}
Firstly, we prove the statement from (1) to (2). By Theorem \ref{main-thm-1}, the flag curvature of the metric $\tilde{F}$
is constantly $-c^2$ in $\mathcal{U}$. As Finsler metrics of constant flag curvature are locally symmetric, both $F$ and $\tilde{F}$ are
locally symmetric in $\mathcal{U}$, which proves the statement from (1) to (2).

Nextly, we prove the statement from (2) to (1). For any $x\in \mathcal{U}$ and $F$-unit vector $y\in T_xM$, we denote
\begin{eqnarray*}
K^F_{\max}(x,y)&=&\max\{K^F(x,y,\mathbf{P})|\forall\mbox{ tangent plane }\mathbf{P}\mbox{ with }y\in\mathbf{P}\},\mbox{ and}\\
K^F_{\min}(x,y)&=&\min\{K^F(x,y,\mathbf{P})|\forall\mbox{ tangent plane }\mathbf{P}\mbox{ with }y\in\mathbf{P}\}.
\end{eqnarray*}

We claim $K^F_{\max}(x,y)\equiv K^F_{\min}(x,y)\equiv0$.

The proof for $\lambda=K^F_{\max}(x,y)=0$ is as following. We choose a unit speed geodesic $\gamma(t)$ in $\mathcal{U}$ for the metric $F$, such that $\gamma(0)=x$ and
$\dot{\gamma}(0)=y$. The locally symmetric property of $F$ implies
that $K^F_{\max}(\gamma(t),\dot{\gamma}(t))$ is a constant function of $t$, i.e.,
$K^F_{\max}(\gamma(t),\dot{\gamma}(t))\equiv 
K^F_{\max}(\gamma(0),\dot{\gamma}(0))=\lambda$. By Theorem \ref{thm-1}, $\tilde{\gamma}(t)=\Psi_t(\gamma(\frac{e^{2ct}-1}{2c}))$ is
a unit speed geodesic for the metric $\tilde{F}$, in which $\Psi_t$'s are the local homothetic translations generated by $V$. Denote $\bar{y}(t)=(\Psi_t)_*(\frac{d}{dt}\gamma(\frac{e^{2ct}-1}{2c}))$,
then the homothetic property implies
$$K^F_{\max}(\tilde{\gamma}(t),\bar{y}(t))=e^{4ct}
K^F_{\max}(\gamma(t),
\dot{\gamma}(t))=\lambda e^{4ct}.$$
Applying Theorem \ref{main-thm-1} to all tangent planes $\mathbf{P}$ and $\tilde{\mathbf{P}}$, containing $\bar{y}(t)$ and $\dot{\tilde{\gamma}}(t)$ respectively, we get
\begin{equation}
\label{1001}
K^{\tilde{F}}_{\max}(\tilde{\gamma}(t),
\dot{\tilde{\gamma}}(t))
=K^F_{\max}(\tilde{\gamma}(t),\bar{y}(t))-c^2=\lambda e^{4ct}-c^2.
\end{equation}
By the locally symmetric property of $\tilde{F}$, $K^{\tilde{F}}_{\max}(\tilde{\gamma}(t),
\dot{\tilde{\gamma}}(t))$ is a constant function of $t$. Since $c\neq0$, we must have
$K^F_{\max}(x,y)=\lambda=0$ for any $x\in M$ and $F$-unit vector
$y\in T_xM$.

Similarly, we can prove $K^F_{\min}(x,y)=0$ for any $x\in M$ and
$F$-unit vector $y\in T_xM$. Then it is obvious to see $K^F\equiv0$, and $K^{\tilde{F}}\equiv -c^2$ by Theorem \ref{main-thm-1}.
To summarize, this argument proves
the statement from (2) to (1).
\end{proof}

\section{Homothetic navigation for isoparametric function}

In this section, we keep all assumptions and
the notations for the Finsler manifold $(M,F)$ and the homothetic navigation as in Section \ref{sec-4}.
Further more, we consider some locally defined isoparametric
functions in some open subset $\mathcal{U}$ of $M$ where $F(x,-V(x))<1$
is satisfied.

Let $f $ be a regular function in $\mathcal{U}$. The notion of {\it isoparametric} property for $f $ is defined by the following conditions \cite{HYS2016}:
\begin{enumerate}
\item The $F$-length function $F(\nabla^F f )$ for gradient vector field $\nabla^F f$ only depends on the values of $f $, i.e., $f $ is {\it transnormal}.
\item The Laplacian $\Delta^F f $ only depends on the values of $f $.
\end{enumerate}

Here $\nabla^F$ is the {\it gradient operator} for the metric $F$, defined by
$\langle\nabla^F f ,W\rangle^F_{\nabla^F f }=df(W)$ for any vector field $W$.
The regularity of $f(x)$ implies $\nabla^F f(x)$ is well defined and $F(\cdot,\nabla^F f )$ is a positive smooth function in $\mathcal{U}$.

Denote $\mathrm{div}_{d\mu}$ the divergence operator with respect to
the smooth volume form $d\mu$, i.e.,
$\mathrm{div}_{d\mu} W\cdot d\mu=\mathcal{L}_W d\mu$,
in which $W$ is any smooth vector field, and $\mathcal{L}$ is
the Lie derivative.
Then the
{\it (nonlinear) Laplacian} $\Delta^F f $ can be presented as
$\Delta^F f =
\mathrm{div}_{\mu^F_{\mathrm{BH}}}\nabla^F f$.

We can always replace an isoparametric or transnormal function $f $ by $\varphi\circ f$ for
some suitable real smooth function $\varphi$, such that
\begin{equation}\label{normal-isoparametric}
F(\nabla^F f )\equiv 1,\mbox{ and }f(x_0)=0\mbox{ for some fixed }x_0\in\mathcal{U}.
\end{equation}
Notice that the isoparametric and transnormal properties are preserved, and the local foliation of the level sets is unchanged. So we only need to consider $f $ satisfying
(\ref{normal-isoparametric}), which is simply called {\it normalized around $x_0$}.\medskip

Now we consider the navigation with the datum $(F,V)$, in which $V$ is a homothetic vector field with dilation $c\neq0$. We study its effect
on the foliation $M_t=f^{-1}(t)$ locally defined by a normalized transnormal function $f(x)$ around each fixed $x_0\in M_0$. We
assume $F(x_0,-V(x_0))<1$, so that $\tilde{F}$ is well defined
around $x_0$. By Lemma 4.1 in \cite{Xu2018}, the integration curves of $\nabla^F f$ are unit speed geodesics on $(M,F)$.

We define a smooth map $\Psi$ locally around $x_0$, such that
$\Psi|_{M_{\frac{e^{2ct-1}}{2c}}}=\Psi_t$ for each value of $t$.

\begin{lemma}\label{lemma-foliation-navigation}
$\Psi$ is an orientation preserving local diffeomorphism around $M_0$.
\end{lemma}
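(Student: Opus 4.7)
The plan is to realize $\Psi$ as a composition of three explicit local diffeomorphisms, with Theorem \ref{thm-1} serving to replace the awkward time-reparametrization by a standard normal geodesic flow for the navigated metric $\tilde{F}$. Smoothness of $\Psi$ in a neighborhood of $M_0$ is immediate from $\Psi(x)=\Psi_{t(x)}(x)$, where the smooth function $t(x)=\frac{1}{2c}\ln(1+2cf(x))$ solves $f(x)=\frac{e^{2ct(x)}-1}{2c}$.

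For the local-diffeomorphism property, I would use that $F(\nabla^F f)\equiv 1$ and that the integral curves of $\nabla^F f$ are unit speed $F$-geodesics, so the $F$-normal flow $\Phi\colon M_0\times(-\epsilon,\epsilon)\to\mathcal{U}$, $\Phi(p,s)=\gamma_p(s)$, is a local diffeomorphism around $M_0\times\{0\}$ and satisfies $f(\Phi(p,s))=s$. Because $\frac{e^{2ct(s)}-1}{2c}=s$, Theorem \ref{thm-1} identifies
\begin{equation*}
\Psi(\Phi(p,s))=\Psi_{t(s)}(\gamma_p(s))=\tilde{\gamma}_p(t(s)),
\end{equation*}
where $\tilde{\gamma}_p$ is the unit speed $\tilde{F}$-geodesic with $\tilde{\gamma}_p(0)=p$ and $\dot{\tilde{\gamma}}_p(0)=\nabla^F f(p)+V(p)$. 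Setting $\tilde{\Phi}(p,t)=\tilde{\gamma}_p(t)$ and $\Theta(p,s)=(p,t(s))$, one obtains the decomposition $\Psi=\tilde{\Phi}\circ\Theta\circ\Phi^{-1}$, reducing the lemma to the fact that $\tilde{\Phi}$ is a local diffeomorphism around $M_0\times\{0\}$.

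The decisive step, which I expect to be the main obstacle, is the transversality of $\nabla^F f(p)+V(p)$ to $M_0$ at each $p$. Pairing with $df_p$ and using $df_p(\nabla^F f(p))=F(\nabla^F f(p))^2=1$ gives $df_p(\nabla^F f(p)+V(p))=1+\langle\nabla^F f(p),V(p)\rangle^F_{\nabla^F f(p)}$, and the fundamental inequality of Finsler geometry applied to $-V(p)$ and $\nabla^F f(p)$ yields
\begin{equation*}
\langle -V(p),\nabla^F f(p)\rangle^F_{\nabla^F f(p)}\leq F(-V(p))\,F(\nabla^F f(p))<1,
\end{equation*}
so $1+df_p(V(p))>0$. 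This is exactly where the hypothesis $F(\cdot,-V(\cdot))<1$ enters. Combined with $t'(s)=1/(1+2cs)>0$, both invertibility and orientation-preservation follow at once; equivalently, $\Psi|_{M_0}=\mathrm{id}$ and the chain rule give $d\Psi_p(v)=v+df_p(v)\,V(p)$ on $T_pM$, whose matrix in the splitting $T_pM=\ker df_p\oplus\mathbb{R}\nabla^F f(p)$ is block upper-triangular with positive determinant $1+df_p(V(p))$. Continuity then propagates the conclusion from $M_0$ to a neighborhood.
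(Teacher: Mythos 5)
Your proof is correct and follows essentially the same route as the paper's: both reduce the statement to showing that at each $p\in M_0$ the differential of $\Psi$ is the identity on $T_pM_0$ and sends $\nabla^F f(p)$ to $\nabla^F f(p)+V(p)$, so that its determinant $1+df_p(V(p))>0$ by the fundamental inequality together with $F(\cdot,-V(\cdot))<1$. Your write-up is in fact slightly more complete, since you also justify the smoothness of $\Psi$ via the explicit reparametrization $t(x)=\frac{1}{2c}\ln(1+2cf(x))$, a point the paper passes over.
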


\begin{proof} It is obvious that $\Psi$ fixes each point of $M_0$. We only need to prove that
for any $x\in M_0$, the tangent map $\Psi_*:T_{x}M\rightarrow T_xM$ is an orientation preserving linear isomorphism. Then Lemma \ref{lemma-foliation-navigation} is obvious by this observation.

The tangent map $\Psi_*$ maps $T_{x}M_0$ identically to itself.
Let $\gamma(t)$ be the unit speed geodesic for the metric $F$, such that $\dot{\gamma}(t)=\nabla^F f(\gamma(t))$ and $\gamma(0)=x$. Theorem \ref{thm-1} indicates that $\tilde{\gamma}(t)=\Psi(\gamma(t))$ is a
unit speed geodesic for the metric $\tilde{F}$ with
$\dot{\tilde{\gamma}}(0)=\Psi_*(\nabla^F f(x))=\nabla^F f(x)+V(x)$.
Since we have assumed $F(x,-V(x))<1$, the strong
convexity of $F$ implies
$\langle\nabla^F f(x )+V(x ),\nabla^F f(x )\rangle_{\nabla^F f(x )}^F>0$.
So $\Psi_*:T_xM\rightarrow T_xM$ is an orientation preserving linear isomorphism for each $x\in M_0$.
\end{proof}

By Lemma \ref{lemma-foliation-navigation}, we can define the smooth function $\tilde{f}$ locally around $x_0$, with the level sets $$\tilde{f}^{-1}(t)=\widetilde{M}_{t}
=\Psi(M_{\frac{e^{2ct}-1}{2c}})=\Psi_t(M_{\frac{e^{2ct}-1}{2c}}).$$

Let $\gamma(t)$ be any integration curve of $\nabla^F f$ with $\gamma(0)\in M_0$ sufficiently close to $x_0$ and $t$ sufficiently close to zero. Denote the points $x={\gamma}(\frac{e^{2ct}-1}{2c})\in M_{\frac{e^{2ct}-1}{2c}}$
and $\tilde{x}=\Psi(x)\in \widetilde{M}_t$. Notice that
$\tilde{f}(\tilde{x})=t$ and $f(x)=\frac{e^{2ct}-1}{2c}$.

Theorem \ref{thm-1}
provides a unit speed geodesic $$\tilde{\gamma}(t)=\Psi(\gamma(\frac{e^{2ct}-1}{2c}))=
\Psi_t(\gamma(\frac{e^{2ct-1}}{2c}))$$
for the metric $\tilde{F}$, so $$\dot{\tilde{\gamma}}(t)=\Psi_*(\frac{d}{dt}{\gamma}
(\frac{e^{2ct}-1}{2c}))=\Psi_*(e^{2ct}\dot{\gamma}
(\frac{e^{2ct}-1}{2c}))=
\Psi_*((2cf(x)+1)\nabla^F f(x))$$ is a $\tilde{F}$-unit vector which is $g^{\tilde{F}}_{\dot{\tilde{\gamma}}(t)}$-orthogonal
to $T_{\tilde{x}}\widetilde{M}_t$. This implies
$$\nabla^{\tilde{F}}\tilde{f}(\tilde{x})=\dot{\tilde{\gamma}}(t)
=\Psi_*((2cf(x)+1)\nabla^F f(x)).$$
To summarize, we have the following lemma.
\begin{lemma} \label{lemma-navigation-normal-direction}
If $f$ is a normalized transnormal function around $x_0$ for the metric $F$, then
$\tilde{f}$ is a normalized transnormal function around $x_0$
for the metric $\tilde{F}$, and
$\nabla^{\tilde{F}}\tilde{f}=\Psi_*((2cf+1)\nabla^F f)$.
\end{lemma}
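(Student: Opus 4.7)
The plan is to directly extract $\nabla^{\tilde F}\tilde f$ by running each integral curve of $\nabla^F f$ through the geodesic correspondence of Theorem \ref{thm-1} and identifying the output as the $\tilde F$-gradient of $\tilde f$. Recall from Lemma 4.1 in \cite{Xu2018} that every integration curve $\gamma(t)$ of $\nabla^F f$ is a unit speed $F$-geodesic, and $\dot\gamma(t)$ is $g^F_{\dot\gamma(t)}$-orthogonal to $T_{\gamma(t)}M_{f(\gamma(t))}$ at every point. Since $f$ is normalized with $F(\nabla^F f)\equiv 1$, the value $f(\gamma(t))$ increases linearly with unit speed, so $\gamma(\frac{e^{2ct}-1}{2c})\in M_{\frac{e^{2ct}-1}{2c}}$. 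By the very definition of $\Psi$ and of $\widetilde M_t=\tilde f^{-1}(t)$, the curve $\tilde\gamma(t)=\Psi_t(\gamma(\frac{e^{2ct}-1}{2c}))$ meets each $\widetilde M_t$ at the point $\tilde x=\Psi(x)$ with $\tilde f(\tilde x)=t$.

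Next I would read off the $\tilde F$-gradient from $\dot{\tilde\gamma}(t)$. By Theorem \ref{thm-1}, $\tilde\gamma(t)$ is a unit speed $\tilde F$-geodesic, so $\dot{\tilde\gamma}(t)$ is an $\tilde F$-unit tangent vector. A direct calculation, already performed just before the lemma statement, gives $\dot{\tilde\gamma}(t)=\Psi_*\!\bigl((2cf(x)+1)\nabla^F f(x)\bigr)$, using the chain rule applied to $\gamma(\frac{e^{2ct}-1}{2c})$ together with the fact that $2cf(x)+1=e^{2ct}$ on the level set carrying $x$. Hence, to identify this vector as $\nabla^{\tilde F}\tilde f(\tilde x)$, I only need the $g^{\tilde F}_{\dot{\tilde\gamma}(t)}$-orthogonality of $\dot{\tilde\gamma}(t)$ to $T_{\tilde x}\widetilde M_t$, which would be the one place requiring actual attention.

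For that orthogonality step I would argue exactly as in the proof of Theorem \ref{thm-2}: since $\dot\gamma(\frac{e^{2ct}-1}{2c})$ is $g^F$-orthogonal to $T M_{\frac{e^{2ct}-1}{2c}}$, the $F$-indicatrix at $\gamma(\frac{e^{2ct}-1}{2c})$ is tangent to that subspace at $\dot\gamma(\frac{e^{2ct}-1}{2c})$; the homothety $\Psi_t$ sends the $F$-indicatrix to a rescaling of the $F$-indicatrix at $\Psi_t(\gamma(\frac{e^{2ct}-1}{2c}))$, so the pushed forward vector $\Psi_*(\dot\gamma)$ is tangent to the $F$-indicatrix at $\bar y(t)$; and because the $\tilde F$-indicatrix is the translate of the $F$-indicatrix by $V$, both remain tangent at $\dot{\tilde\gamma}(t)=\bar y(t)+V(\tilde\gamma(t))$. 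Thus $\dot{\tilde\gamma}(t)$ is $g^{\tilde F}_{\dot{\tilde\gamma}(t)}$-orthogonal to $\Psi_*(T_x M_{\frac{e^{2ct}-1}{2c}})=T_{\tilde x}\widetilde M_t$, which, combined with its $\tilde F$-unit length, forces $\nabla^{\tilde F}\tilde f(\tilde x)=\dot{\tilde\gamma}(t)$.

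Finally I would harvest the remaining assertions cheaply. The identity $\tilde F(\nabla^{\tilde F}\tilde f)\equiv 1$ is immediate since $\dot{\tilde\gamma}(t)$ is $\tilde F$-unit, so $\tilde f$ is transnormal and normalized in the sense of (\ref{normal-isoparametric}); that $\tilde f(x_0)=0$ follows because $\Psi$ fixes every point of $M_0$, hence $\widetilde M_0=M_0\ni x_0$. The main obstacle, as noted, is keeping track of the direction-dependent notion of orthogonality for $\tilde F$ when passing from $F$, and the clean way around it is the indicatrix-tangency argument borrowed from Theorem \ref{thm-2} rather than any computation with the fundamental tensor of $\tilde F$ directly.
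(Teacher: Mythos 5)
Your proposal is correct and follows essentially the same route as the paper: the paper's ``proof'' is precisely the discussion preceding the lemma statement, which runs the integral curves of $\nabla^F f$ (unit speed $F$-geodesics by Lemma 4.1 of \cite{Xu2018}) through Theorem \ref{thm-1}, computes $\dot{\tilde\gamma}(t)=\Psi_*((2cf+1)\nabla^F f)$ by the chain rule, and identifies this $\tilde F$-unit vector with $\nabla^{\tilde F}\tilde f$ via its $g^{\tilde F}_{\dot{\tilde\gamma}(t)}$-orthogonality to $T_{\tilde x}\widetilde M_t$. The only difference is that you spell out the orthogonality step with the indicatrix-tangency argument from Theorem \ref{thm-2}, which the paper merely asserts.
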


 Comparing $d\mu^F_{\mathrm{BH}}(x)$
and $d\mu^{\tilde{F}}_{\mathrm{BH}}(\tilde{x})$, we get

\begin{lemma}\label{lemma-navigation-BH-2}
$\Psi^*(d\mu^{\tilde{F}}_{\mathrm{BH}}(x))
=(1+c_0(x))(2cf(x)+1)^{-n-1}d\mu^F_{\mathrm{BH}}(x),
$
in which $c_0(\cdot)$ is a smooth function around $x_0$ which
is constant along each integration curve of $\nabla^F f$.
\end{lemma}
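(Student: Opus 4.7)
The plan is to reduce $\Psi^* d\mu^{\tilde F}_{\mathrm{BH}}$ to two ingredients already established: Lemma \ref{lemma-navi-BH-volume}, which gives $d\mu^{\tilde F}_{\mathrm{BH}}=d\mu^F_{\mathrm{BH}}$ in $\mathcal{U}$, and Lemma \ref{lemma-homo-BH}, which gives $(\Psi_t)^*d\mu^F_{\mathrm{BH}}=e^{-2cnt}d\mu^F_{\mathrm{BH}}$. First I would present $\Psi$ as a flow with $x$-dependent time: since $\Psi|_{M_s}=\Psi_t$ with $s=(e^{2ct}-1)/(2c)$, one has $\Psi(x)=\Psi_{t(x)}(x)$ with $t(x)=\frac{1}{2c}\ln(2cf(x)+1)$, in particular $e^{2ct(x)}=2cf(x)+1$. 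The key observation is that even though the factor $e^{-2cnt_0}$ from Lemma \ref{lemma-homo-BH} already accounts for most of the conformal factor, the $x$-dependence of $t(x)$ produces an extra contribution which carries the $c_0(x)$-term.

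To make this precise, pick the basis $v_1=\nabla^F f(x),\,v_2,\ldots,v_n\in T_xM_{f(x)}$. Because $t(x)$ is constant on each level set, the chain rule gives $\Psi_*(v_i)=(\Psi_{t_0})_*(v_i)$ for $i\geq 2$ and $\Psi_*(v_1)=(\Psi_{t_0})_*(v_1)+\frac{1}{2cf(x)+1}\,V(\tilde x)$, writing $t_0=t(x)$ and $\tilde x=\Psi(x)$. Expanding $V(\tilde x)=\sum_i a^i(\Psi_{t_0})_*(v_i)$ in the pushed-forward basis, the alternating property of the top form $d\mu^{\tilde F}_{\mathrm{BH}}|_{\tilde x}=d\mu^F_{\mathrm{BH}}|_{\tilde x}$ kills every $a^i$ with $i\geq 2$. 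Applying Lemma \ref{lemma-homo-BH} to rewrite $d\mu^F_{\mathrm{BH}}|_{\tilde x}((\Psi_{t_0})_*v_1,\ldots,(\Psi_{t_0})_*v_n)$ as $(2cf(x)+1)^{-n}d\mu^F_{\mathrm{BH}}|_x(v_1,\ldots,v_n)$, I would obtain
$$\Psi^*d\mu^{\tilde F}_{\mathrm{BH}}\Big|_x=\frac{(2cf(x)+1)+a^1}{(2cf(x)+1)^{n+1}}\,d\mu^F_{\mathrm{BH}}\Big|_x.$$

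The final step is to identify $a^1$. Since $V$ generates $\Psi_t$, it is $\Psi_t$-invariant, so $(\Psi_{-t_0})_*V(\tilde x)=V(x)$ and thus $a^1=df_x(V(x))=\langle V(x),\nabla^F f(x)\rangle^F_{\nabla^F f(x)}$. The integration curve of $\nabla^F f$ through $x$ is a unit speed $F$-geodesic (Lemma 4.1 in \cite{Xu2018}) parametrized so that its arc length agrees with $f$; applying Lemma \ref{lemma-homo-Jacobi} along it gives $\langle V(x),\nabla^F f(x)\rangle^F_{\nabla^F f(x)}=c_0(x)-2cf(x)$, where $c_0(x)$ is the corresponding Jacobi-type constant, hence constant along each integration curve of $\nabla^F f$. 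Substituting, the numerator collapses to $1+c_0(x)$, which is the claimed formula. The main obstacle I expect is purely bookkeeping: correctly tracking the $x$-dependence of $t(x)$ in the chain rule so that the extra contribution $V(\tilde x)/(2cf(x)+1)$ in $\Psi_*(v_1)$ is not overlooked, since it is the sole source of the $c_0(x)$ factor.
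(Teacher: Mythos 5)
Your proposal is correct and follows essentially the same route as the paper: the paper factors $\Psi=\Psi_t\circ\Phi_t$ with $\Phi_t=\Psi_{-t}\circ\Psi$ and computes the Jacobian of $\Phi_t$ as a shear by $e^{-2ct}V(x)$ in the $\nabla^F f$-direction, which is exactly your chain-rule term $\tfrac{1}{2cf+1}V(\tilde x)$ after conjugation, and both arguments then combine Lemma \ref{lemma-homo-BH} with Lemma \ref{lemma-homo-Jacobi} to identify the factor $1+c_0(x)$.
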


\begin{proof} Firstly, 
we have $d\mu^{\tilde{F}}_{\mathrm{BH}}=d\mu^F_{\mathrm{BH}}$ by Lemma \ref{lemma-navi-BH-volume}, so we only need to concern the
metric $F$ in the following discussion.

We fix any value of $t$ and consider the tangent map $(\Phi_t)_*$ for $\Phi_t=\Psi_{-t}\circ\Psi$ at $x=\gamma(\frac{e^{2ct}-1}{2c})$. The
restriction of $(\Phi_t)_*$ to $T_{x}M_{\frac{e^{2ct}-1}{2c}}$ is
the identity map, and $(\Phi_t)_*$ maps $\nabla^F f(x)=\dot{\gamma}(\frac{e^{2ct}-1}{2c})$ to
$\dot{\gamma}(\frac{e^{2ct}-1}{2c})+ e^{-2ct}V(x)$. So by Lemma \ref{lemma-navigation-fundamental-tensor},
\begin{eqnarray*}
\Phi_t^*(d\mu^F_{\mathrm{BH}}(x))&=&\det((\Phi_t)_*|_{T_xM})\cdot
d\mu^F_{\mathrm{BH}}
\\
&=&
\langle \dot{\gamma}(\frac{e^{2ct}-1}{2c})+e^{-2ct}V(x),
\dot{\gamma}(\frac{e^{2ct}-1}{2c})
\rangle^F_{\dot{\gamma}(\frac{e^{2ct}-1}{2c})}\cdot
d\mu^F_{\mathrm{BH}}(x)\\
&=&
(1+e^{-2ct}\langle V(x),\dot{\gamma}(\frac{e^{2ct}-1}{2c})
\rangle^F_{\dot{\gamma}(\frac{e^{2ct}-1}{2c})})\cdot
d\mu^F_{\mathrm{BH}}(x)\\
&=&(1+e^{-2ct}(c_0 -2c\cdot\frac{e^{2ct}-1}{2c}))\cdot
d\mu^F_{\mathrm{BH}}(x)\\
&=& e^{-2ct}(1+c_0 )\cdot d\mu^F_\mathrm{BH},
\end{eqnarray*}
Here $\det((\Phi_t)_*|_{T_xM})$ is the determinant of the matrix
for $(\Phi_t)_*|_{T_xM}$ with respect to any $g^F_{\dot{\gamma}(\frac{e^{2ct}-1}{2c})}$-orthonormal basis
$\{e_1=\dot{\gamma}(\frac{e^{2ct}-1}{2c}),e_2,\ldots,e_n\}$.
The constant
$c_0 $ is provided by Lemma \ref{lemma-homo-Jacobi} which depends on the geodesic $\gamma(t)$. So locally around $x_0$, we
can denote it as a smooth function $c_0(x)$, which is constant
along each integration curve of $\nabla^F f$.

By Lemma \ref{lemma-homo-BH},
$\Psi_t^*(d\mu^F_{\mathrm{BH}}(\tilde{x}))=
e^{-2cnt}d\mu^F_{\mathrm{BH}}
(x)$, in which $\tilde{x}=\Psi(x)$.
So we have
\begin{eqnarray*}
\Psi^*(d\mu^F_{\mathrm{BH}}(\tilde{x}))
&=&\Phi_t^*\Psi_t^*(d\mu^F_{\mathrm{BH}}(\tilde{x}))
=\Phi_t^*(e^{-2cnt}d\mu^F_{\mathrm{BH}}
(x))\\
&=&e^{-2c(n+1)t}(1+c_0(x))\cdot d\mu^F_{\mathrm{BH}}(x)\\
&=&(1+c_0(x))(2cf(x)+1)^{-n-1}d\mu^F_{\mathrm{BH}}(x).
\end{eqnarray*}

This ends the proof of Lemma \ref{lemma-navigation-BH-2}.
\end{proof}

Summarizing above discussion, we can prove the following key lemma.

\begin{lemma}\label{lemma-last}
Let $f$ be a normalized transnormal function around $x_0$, then
\begin{equation}\label{0080}
\Psi^*\Delta^{\tilde{F}}\tilde{f}=(2cf+1)\Delta^F f -2cn.
\end{equation}
\end{lemma}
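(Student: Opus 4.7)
The plan is to translate the statement into Lie-derivative language, where the diffeomorphism $\Psi$ can be pushed through cleanly. Starting from the defining identity $\mathcal{L}_{\nabla^{\tilde F}\tilde f}\,d\mu^{\tilde F}_{\mathrm{BH}}=\Delta^{\tilde F}\tilde f\cdot d\mu^{\tilde F}_{\mathrm{BH}}$, I would pull back by $\Psi$. By Lemma \ref{lemma-navigation-normal-direction} the vector fields $\nabla^{\tilde F}\tilde f$ and $W:=(2cf+1)\nabla^F f$ are $\Psi$-related, and $\Psi$ is a local diffeomorphism by Lemma \ref{lemma-foliation-navigation}, so naturality of the Lie derivative gives
\begin{equation*}
(\Psi^*\Delta^{\tilde F}\tilde f)\cdot\Psi^*d\mu^{\tilde F}_{\mathrm{BH}}=\mathcal{L}_W\bigl(\Psi^*d\mu^{\tilde F}_{\mathrm{BH}}\bigr).
\end{equation*}
Lemma \ref{lemma-navigation-BH-2} identifies $\Psi^*d\mu^{\tilde F}_{\mathrm{BH}}=h\,d\mu^F_{\mathrm{BH}}$ with the strictly positive function $h=(1+c_0(x))(2cf+1)^{-n-1}$, so the problem is reduced to computing $\mathcal{L}_W(h\,d\mu^F_{\mathrm{BH}})$ intrinsically on $(M,F)$.

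The next step is to expand the right hand side via the Leibniz rule $\mathcal{L}_W(h\,\omega)=W(h)\,\omega+h\,\mathcal{L}_W\omega$, which reduces the entire proof to three one-line computations: (i) the normalization $F(\nabla^F f)\equiv 1$ forces $\nabla^F f(f)=\langle\nabla^F f,\nabla^F f\rangle^F_{\nabla^F f}=1$, hence $W(f)=2cf+1$; (ii) by Lemma \ref{lemma-navigation-BH-2} the function $c_0(x)$ is constant along integration curves of $\nabla^F f$, so $W(c_0)=0$ and therefore $W(h)=-2c(n+1)h$; (iii) the product rule for divergence gives
\begin{equation*}
\mathrm{div}_{d\mu^F_{\mathrm{BH}}}W=(2cf+1)\Delta^F f+\nabla^F f(2cf+1)=(2cf+1)\Delta^F f+2c.
\end{equation*}
Assembling these, the factor $h$ factors out on both sides of the master equation, and the numerical collapse $2c-2c(n+1)=-2cn$ yields exactly (\ref{0080}).

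The step I expect to be most delicate is not the arithmetic but observation (ii): without the $\nabla^F f$-invariance of $c_0$, the anticipated cancellation would leave behind an extraneous $W(c_0)$-term and the clean right hand side $(2cf+1)\Delta^F f-2cn$ would be destroyed. Fortunately that invariance is already recorded in Lemma \ref{lemma-navigation-BH-2}; its geometric source is Lemma \ref{lemma-homo-Jacobi}, which along an integration curve $\gamma(s)$ of $\nabla^F f$ identifies $\langle V(\gamma(s)),\dot\gamma(s)\rangle^F_{\dot\gamma(s)}$ with the affine function $c_0-2cs$, so the intercept $c_0$ is determined by the foot point on $M_0$ alone and is therefore constant along the curve. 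With this ingredient in place, the remainder of the argument is a routine Leibniz computation.
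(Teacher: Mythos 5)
Your argument is correct and is essentially the paper's own proof: both rest on the gradient correspondence of Lemma \ref{lemma-navigation-normal-direction}, the pullback formula of Lemma \ref{lemma-navigation-BH-2}, the naturality of the Lie derivative under $\Psi$, and a Leibniz-rule computation in which the invariance of $c_0$ along integration curves of $\nabla^F f$ kills the would-be extra term. The only (immaterial) difference is bookkeeping: you apply Leibniz to $\mathcal{L}_W(h\,d\mu^F_{\mathrm{BH}})$ with $W=(2cf+1)\nabla^F f$, while the paper first factors $(2cf+1)$ out of the vector field; the arithmetic $2c-2c(n+1)=-2cn$ is identical.
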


\begin{proof}
By Lemma \ref{lemma-navi-BH-volume}, Lemma \ref{lemma-navigation-normal-direction}  and Lemma \ref{lemma-navigation-BH-2},
\begin{eqnarray}
\Psi^*\Delta^{\tilde{F}}\tilde{f}
&=&\Psi^*\mathrm{div}_{d\mu^F_{\mathrm{BH}}}(\Psi_*((2cf+1)\nabla^F f))=\mathrm{div}_{\Psi^*d\mu^F_{\mathrm{BH}}}((2cf+1)\nabla^F f)\nonumber\\
&=&\mathrm{div}_{((1+c_0(x))(2cf+1)^{-n-1}
d\mu^F_\mathrm{BH})}
((2cf+1)\nabla^F f)\nonumber\\
&=&\frac{\mathcal{L}_{((2cf+1)\nabla^F f)}
((1+c_0(x))(2cf+1)^{-n-1}
d\mu^F_\mathrm{BH})}{(1+c_0(x))(2cf+1)^{-n-1}
d\mu^F_\mathrm{BH}}\nonumber\\
&=&(\nabla^F f)(2cf+1)+(2cf+1)
\cdot\frac{\mathcal{L}_{\nabla^F f}((1+c_0(x))(2cf+1)^{-n-1}
d\mu^F_\mathrm{BH})}{(1+c_0(x))(2cf+1)^{-n-1})
d\mu^F_\mathrm{BH}}\nonumber\\
&=&2c+(2cf+1)\Delta^F f +(2cf+1)
(\nabla^F f)(\ln(2cf+1)^{-n-1})+\ln(1+c_0(x)))\nonumber\\
&=&(2cf+1)\Delta^F f -2cn,
\label{0070}
\end{eqnarray}
in which $c_0(x)$ does not appear in the last line because $(\nabla^F f)(c_0(x))\equiv0$ by
Lemma \ref{lemma-navigation-BH-2}.
\end{proof}


Obviously, when $f$ is isoparametric for $(F,d\mu^F_{\mathrm{BH}})$,  $\Delta^F f$ is constant on each $M_t$. By Lemma \ref{lemma-last}, $\Delta^{\tilde{F}}\tilde{f}$ is constant on each $\widetilde{M}_t$,
i.e., $\tilde{f}$ is a normalized isoparametric function around $x_0$ for $(\tilde{F},d\mu^{\tilde{F}}_{\mathrm{BH}})$.

To summarize, we have proved
\begin{theorem}\label{main-thm-2}
Let $\tilde{F}$ be the Finsler metric defined by navigation from
the datum $(F,V)$ in which $V$ is a homothetic vector field with
dilation $c\neq0$. Assume $x_0$ is a point where $F(x_0,-V(x_0))<1$. Then for any
normalized isoparametric function $f$ for  $(F,d\mu^F_{\mathrm{BH}})$ around the point $x_0$,
the function $\tilde{f}$ defined by
$\tilde{f}^{-1}(t)=\Psi_t(f^{-1}(\frac{e^{2ct}-1}{2c}))$
is a normalized isoparametric function
for $(\tilde{F},d\mu^{\tilde{F}}_{\mathrm{BH}})$ around $x_0$.
\end{theorem}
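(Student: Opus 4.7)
The plan is to deduce the theorem almost immediately from Lemmas \ref{lemma-navigation-normal-direction} and \ref{lemma-last}, which have already packaged all of the analytic content. I would verify in turn the two defining conditions of a normalized isoparametric function for $\tilde{F}$: transnormality (together with $\tilde{F}(\nabla^{\tilde{F}}\tilde{f}) \equiv 1$ and $\tilde{f}(x_0) = 0$), and the fact that $\Delta^{\tilde{F}}\tilde{f}$ depends only on $\tilde{f}$.

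First, I would simply cite Lemma \ref{lemma-navigation-normal-direction}, which asserts exactly that $\tilde{f}$ is a normalized transnormal function for $\tilde{F}$ around $x_0$; this dispatches half of the definition without any further work, and also records the useful formula $\nabla^{\tilde{F}}\tilde{f} = \Psi_*((2cf+1)\nabla^F f)$, which will be used implicitly through the pullback identity.

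Second, for the Laplacian condition I would invoke Lemma \ref{lemma-last}, which gives
$$\Psi^*\Delta^{\tilde{F}}\tilde{f} \;=\; (2cf + 1)\,\Delta^F f \;-\; 2cn.$$
Since $f$ is isoparametric for $(F, d\mu^F_{\mathrm{BH}})$, both $f$ and $\Delta^F f$ are constant on each level set $M_s = f^{-1}(s)$, so the right-hand side is a function of $f$ alone and hence constant on each $M_s$. Taking $s = (e^{2ct}-1)/(2c)$, it follows that $\Psi^*\Delta^{\tilde{F}}\tilde{f}$ is constant on $M_{(e^{2ct}-1)/(2c)}$. By the very definition of $\tilde{f}$, the map $\Psi$ sends this level set diffeomorphically onto $\widetilde{M}_t = \tilde{f}^{-1}(t)$, so $\Delta^{\tilde{F}}\tilde{f}$ is constant on each $\widetilde{M}_t$ and therefore depends only on the value of $\tilde{f}$. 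Combined with the normalized transnormality from the previous paragraph, this completes the proof.

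I do not anticipate a genuine obstacle at the level of the theorem itself: all of the analytic difficulty has already been absorbed into Lemma \ref{lemma-last}, whose proof relied on carefully tracking both the navigation invariance of the B.H. volume form (Lemma \ref{lemma-navi-BH-volume}) and the homothetic rescaling (Lemma \ref{lemma-homo-BH}), and combining them via the Lie-derivative characterization of the divergence. Given those ingredients, the present theorem reduces to the essentially tautological observation that ``constant on $M_s$'' translates under the diffeomorphism $\Psi$ into ``constant on $\widetilde{M}_t$''.
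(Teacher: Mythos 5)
Your proposal is correct and follows exactly the paper's own route: the paper likewise obtains normalized transnormality of $\tilde{f}$ from Lemma \ref{lemma-navigation-normal-direction} and then reads off the isoparametric condition from the Laplacian identity of Lemma \ref{lemma-last}, noting that the right-hand side $(2cf+1)\Delta^F f - 2cn$ is constant on each level set $M_{(e^{2ct}-1)/(2c)}$ and that $\Psi$ carries these level sets onto the $\widetilde{M}_t$. No gaps.
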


Nextly, we consider a normalized isoparametric function $\tilde{f}$ for $(\tilde{F},d\mu^{\tilde{F}}_{\mathrm{BH}})$
around $x_0\in M$. We can construct a smooth function $f$ locally around $x_0$,
such that $\tilde{f}^{-1}(t)=\Psi_t(f^{-1}(\frac{e^{2ct}-1}{2c}))$. By similar argument as for Lemma \ref{lemma-navigation-normal-direction}, we can prove $f$ is
a normalized transnormal function around $x_0$. Using Lemma \ref{lemma-last} again, it is easy to see that when $\Delta^{\tilde{F}}\tilde{f}$ is constant on each level set of
$\tilde{f}$, $\Delta^F f$ is constant on each level set of $f$, i.e., $f$ is a normalized isoparametric function for $(F,d\mu^F_{\mathrm{BH}})$.
So Theorem \ref{main-thm-2} can be strengthened as following.

\begin{theorem}\label{main-thm-3}
Keep all assumptions and notation in Theorem \ref{main-thm-2}. Then we have a one-to-one correspondence from $f$ to $\tilde{f}$
between normalized isoparametric functions around $x_0$, with respect to $(F,d\mu^F_{\mathrm{BH}})$ and $(\tilde{F},
d\mu^{\tilde{F}}_{BH})$ respectively.
\end{theorem}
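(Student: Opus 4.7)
My plan is to leverage Theorem \ref{main-thm-2}, which already delivers the forward direction $f\mapsto\tilde{f}$, and to construct an inverse assignment $\tilde{f}\mapsto f$ using the same geometric relation $\tilde{f}^{-1}(t)=\Psi_t(f^{-1}(\frac{e^{2ct}-1}{2c}))$. Once both directions are established, bijectivity is essentially tautological, since both maps are characterized by the same identity on level sets together with the normalization $f(x_0)=\tilde{f}(x_0)=0$.

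For the reverse construction, I would start with a normalized isoparametric function $\tilde{f}$ for $(\tilde{F},d\mu^{\tilde{F}}_{\mathrm{BH}})$ around $x_0$ and use that the integration curves of $\nabla^{\tilde{F}}\tilde{f}$ are unit speed $\tilde{F}$-geodesics. Applying Theorem \ref{thm-1} in the reverse direction, each such geodesic $\tilde{\gamma}(t)$ arises as $\Psi_t(\gamma(\frac{e^{2ct}-1}{2c}))$ for a unique unit speed $F$-geodesic $\gamma$. Lemma \ref{lemma-foliation-navigation} guarantees that $\Psi$ is a local diffeomorphism near $x_0$, so the level sets $\widetilde{M}_t=\tilde{f}^{-1}(t)$ transfer through $\Psi^{-1}$ to smooth hypersurfaces $M_s$, where $s=\frac{e^{2ct}-1}{2c}$. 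I would then define $f$ as the smooth function with $f^{-1}(s)=M_s$ and $f(x_0)=0$. Transnormality of $f$, namely $F(\nabla^F f)\equiv 1$, follows by reversing the argument of Lemma \ref{lemma-navigation-normal-direction}: the identity $\nabla^{\tilde{F}}\tilde{f}=\Psi_*((2cf+1)\nabla^F f)$ together with the unit $\tilde{F}$-length of $\nabla^{\tilde{F}}\tilde{f}$ forces $\nabla^F f$ to have unit $F$-length.

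With $f$ normalized and transnormal at hand, I would invoke Lemma \ref{lemma-last}, $\Psi^*\Delta^{\tilde{F}}\tilde{f}=(2cf+1)\Delta^F f-2cn$. Constancy of $\Delta^{\tilde{F}}\tilde{f}$ on each $\widetilde{M}_t$ transfers via $\Psi^{-1}$ to constancy of the left-hand side on each $M_s$, and since $2cf+1$ depends only on $s$, we immediately conclude that $\Delta^F f$ is constant on each $M_s$. Hence $f$ is isoparametric for $(F,d\mu^F_{\mathrm{BH}})$, and the map $\tilde{f}\mapsto f$ so constructed is inverse to the assignment of Theorem \ref{main-thm-2}, completing the proof of the one-to-one correspondence.

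The main obstacle I anticipate is the reverse transnormality argument: in the forward direction the proof uses that integration curves of $\nabla^F f$ are $F$-geodesics in order to feed into Theorem \ref{thm-1}, and going in the opposite direction I must be sure that the hypersurfaces $M_s$ produced by pulling back $\widetilde{M}_t$ admit a normal direction whose integration curves are unit speed $F$-geodesics. This, however, is precisely what Theorem \ref{thm-1} guarantees when applied in reverse to the unit speed $\tilde{F}$-geodesics generated by $\nabla^{\tilde{F}}\tilde{f}$, so the obstacle is more one of careful bookkeeping than of substantive new content.
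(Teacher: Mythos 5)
Your proposal is correct and follows essentially the same route as the paper: the forward direction is Theorem \ref{main-thm-2}, and the reverse direction constructs $f$ from $\tilde{f}$ via the same level-set relation, establishes transnormality of $f$ by reversing Lemma \ref{lemma-navigation-normal-direction} (using Theorem \ref{thm-1} for the $\tilde{F}$-geodesics generated by $\nabla^{\tilde{F}}\tilde{f}$), and then applies Lemma \ref{lemma-last} to transfer the constancy of the Laplacian on level sets. The paper states these reverse steps more tersely, but the logical content is identical to yours.
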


Above argument also works in the case that $V$ is a Killing vector field. In this case, we only need to modify $\Psi$ such that $\Psi|_{M_t}=\Psi_t$, and make a few more minor changes accordingly. The correspondence between normalized isoparametric functions around $x_0$ is then from $f$ to $\tilde{f}=(\Psi^{-1})^* f$. To avoid iteration, we skip the details.
\medskip

\begin{proof}[Proof of Theorem \ref{main-thm-4}]
For any isoparametric hypersurface $N$ for either $F$ or $\tilde{F}$, locally around $x_0$ where $F(x_0,-V(x_0))<1$, we can find a normalized isoparametric function accordingly, such that $N$ is the level set for the zero value. By Theorem \ref{main-thm-3} and its Killing navigation version, $N$ is also isoparametric for the other metric.
\end{proof}
\medskip

%
%
%
%

Finally, we remark that Theorem \ref{main-thm-4} helps us find abundant examples of non-homogeneous isoparametric hypersurfaces
in Finsler geometry.

Let $(M,F)$ be a Finsler manifold admitting the cohomogeneity one isometric action of a connected Lie group $G$, such that each $G$-orbit is closed in $M$. Then principal $G$-orbits are  homogeneous
isoparametric hypersurface for $(F,d\mu^F_{\mathrm{BH}})$ \cite{Xu2018}.
Denote $\tilde{F}$ the metric
defined by navigation from the datum $(F,V)$ in which $V$
is a Killing or homothetic vector field. Then the non-empty
intersection between any principal $G$-orbit $G\cdot x$ and $\mathcal{U}=\{x\in M| F(x,-V(x))<1\}$
provides isoparametric hypersurfaces for $(\tilde{F},d\mu^{\tilde{F}}_{\mathrm{BH}})$.
Generally speaking, the connected isometry group of $(M,\tilde{F})$ is
smaller than that of $(M,F)$. So very likely,
many homogeneous isoparametric hypersurfaces for $(F,d\mu^F_{\mathrm{BH}})$ lose their homogeneity
after the navigation. See Theorem 5.4 in \cite{Xu2018} for the case that $(M,\tilde{F})$ is a Randers sphere of constant flag curvature.
\medskip

{\bf Acknowledgements.} The first author is supported by National Natural Science Foundation of China (No.~11821101, No.~11771331), Beijing Natural Science Foundation
(No.~00719210010001, No.~1182006), Research Cooperation Contract (No.~Z180004), and Capacity Building for Sci-Tech  Innovation -- Fundamental Scientific Research Funds (No.~KM201910028021). The second author is support by DFG
(projects MA 2565/4 and  MA 2565/6). He
would also like to thank Capital Normal University in Beijing China for hospitality during the preparation of this paper. All the authors would like to thank Qun He sincerely for her precious suggestions.

\vspace{5mm}
\end{document}